\newcommand{\ti}{\widetilde}
\newcommand{\al}{\alpha}
\newcommand{\be}{\beta}
\newcommand{\la}{\lambda}
\newcommand{\ity}{\infty}
\newcommand{\C}{\mathbb{C}}
\newcommand{\N}{\mathbb{N}}
\newcommand{\B}{\Big}
\numberwithin{equation}{section}
\newtheorem{theorem}{Theorem}[section]
\newtheorem{lemma}[theorem]{Lemma}
\newtheorem{corollary}[theorem]{Corollary}
\theoremstyle{remark}
\newtheorem{remark}[theorem]{Remark}
\newtheorem{definition}[theorem]{Definition}
\begin{document}
\title[semigroups and their dynamics]{escaping set and julia set of transcendental semigroups}
\author[D. Kumar]{Dinesh Kumar}
\address{Department of Mathematics, University of Delhi,
Delhi--110 007, India}

\email{dinukumar680@gmail.com }
\author[S. Kumar]{Sanjay Kumar}

\address{Department of Mathematics, Deen Dayal Upadhyaya College, University of Delhi,
New Delhi--110 015, India }

\author[K. K. Poon]{Kin Keung Poon}
\address{Department of Mathematics and Information Technology, Hong Kong Institute of Education,
Hong Kong}

\email{kkpoon@ied.edu.hk}

\begin{abstract}
We discuss the dynamics of  semigroups of transcendental entire functions using Fatou-Julia theory and provide a condition for the complete invariance of  escaping set and Julia set of  transcendental semigroups. Results regarding  limit functions and postsingular set are also derived. In addition, classes of hyperbolic and postsingularly bounded   transcendental semigroups are given. We also provide certain criterion for non existence of   wandering domains of  transcendental semigroups. 
\end{abstract}

\keywords{Semigroup, normal family, completely invariant, hyperbolic}

\subjclass[2010]{37F10, 30D05}

\maketitle

\section{introduction}\label{sec1}

Let $f$ be a transcendental entire function and for $n\in\N,$ let $f^n$ denote the $n$-th iterate of $f.$ The set $F(f)=\{z\in\C : \{f^n\}_{n\in\N}\,\text{ is normal in some neighborhood of}\, z\}$ is called the Fatou set of $f$ or the set of normality of $f$ and its complement $J(f)$ is called the Julia set of $f$. An introduction to the basic properties of these sets can be found in \cite{bergweiler}. The escaping set of $f$ denoted by $I(f)$ is the set of points in the complex plane that tend to infinity under iteration of $f$. 
The set $I(f)$ was introduced for the first time by Eremenko \cite {e1} who established that $I(f)$ is non empty, and each component of $\overline{I(f)}$ is unbounded.

A complex number $\xi\in\C$ is called a \emph{critical value} of a transcendental entire function $f$ if there exist some $w\in\C$ such that $f(w)=\xi$ and $f'(w)=0.$ Here $w$ is called a \emph{critical point} of $f$ and its image under $f$ is a critical value of $f.$ A complex number  $\zeta\in\C$ is an \emph{asymptotic value} of a transcendental entire function $f$ if there exist a curve $\Gamma$ tending to infinity such that $f(z)\to \zeta$ as $z\to\ity$ along $\Gamma.$  Recall the
 Eremenko-Lyubich class
 \[\mathcal{B}=\{f:\C\to\C\,\,\text{transcendental entire}: \text{Sing}{(f^{-1})}\,\text{is bounded}\},\]
where Sing($f^{-1}$) is the set of critical values and asymptotic values of $f$ and their finite limit points. Each $f\in\mathcal B$ is said to be of \emph{bounded type}. Moreover,  if $f$ and $g$ are of bounded type, then so is $f\circ g$ \cite{berg2}. 
The definitions for  critical point, critical value and asymptotic value of a transcendental semigroup $G$ were provided in \cite{dinesh1}. 

Two functions $f$ and $g$ are called permutable if $f\circ g=g\circ f.$
Fatou   proved that if $f$ and $g$ are two rational functions which are permutable, then $F(f)=F(g),$ see \cite{beardon}. This was an important result  that motivated the dynamics of composition of complex functions. Analogous results for transcendental entire functions are still not known, though it holds in some very special cases   \cite[Lemma 4.5]{baker2}.
 The authors  in \cite{dinesh3} have constructed several  examples where the dynamical behavior of $f$ and $g$ differ to a large extent, from the dynamical behavior of their compositions. Using approximation theory of entire functions, they have shown the existence of entire functions $f$ and $g$ having infinite number of domains satisfying various properties and relating it to their compositions. They explored and enlarged all the maximum possible ways of the solution in comparison to the past result worked out. It would be interesting to explore such relations in the context of transcendental semigroups and its constituent elements.
The authors considered the relationship between Fatou sets and singular values of transcendental entire functions $f, g$ and their compositions \cite{dinesh2}. They provided several conditions under which Fatou sets of $f$ and $f\circ g$ coincide and also considered the relation between the singular values of $f, g$ and their compositions. 

Recently, the dynamics of composite of two or more  complex functions have been studied by many authors.
 The seminal work in this direction was done by Hinkkanen and Martin \cite{martin} related to semigroups of rational functions. In their  papers, they extended the classical theory of the dynamics  associated to the iteration of a rational function of one complex variable to a more general setting of an arbitrary semigroup of rational functions. Several results were extended to semigroups of transcendental entire functions in \cite{{cheng}, {dinesh1}, {dinesh5}, {dinesh6}, {poon1}, {zhigang}}. 
It should be noted that Sumi  has done an extensive work in the semigroup theory of rational functions and holomorphic maps. He has written a series of papers, for instance, \cite{sumi1, sumi2}. 

A transcendental semigroup $G$ is a semigroup generated by a family of transcendental entire functions $\{f_1,f_2,\ldots\}$ with the semigroup operation being functional composition. Denote  the semigroup by $G=[f_1,f_2,\ldots].$ Thus  each $g\in G$ is a transcendental entire function and $G$ is closed under composition. The semigroup $G$ is called \emph{abelian} if $f_i\circ f_j=f_j\circ f_i,$ for all $i, j\in\N.$
The  Fatou set $F(G)$ of a transcendental semigroup $G$, is the largest open subset of $\C$ on which the family of functions in $G$ is normal and the Julia set $J(G)$ of $G$ is the complement of $F(G),$ that is, $J(G)=\ti\C\setminus F(G).$ The semigroup generated by a single function $g$ is denoted by $[g].$ 
The following definitions are well known in  transcendental semigroup theory.
\begin{definition}\label{sec1,defn1}
Let $G$ be a transcendental semigroup. A set $W$ is said to be \emph{forward invariant} under $G$ if $g(W)\subset W$ for all $g\in G$ and $W$ is said to be  \emph{backward invariant} under $G$ if $g^{-1}(W)=\{w\in\C:g(w)\in W\}\subset W$ for all $g\in G.$ Furthermore, $W$ is called \emph{completely invariant} under $G$ if it is both forward and backward invariant under $G.$ 
\end{definition}
For a transcendental semigroup $G,$ $F(G)$ is forward invariant and $J(G)$ is backward invariant, \cite[Theorem 2.1]{poon1}.

The contrast between the dynamics of a semigroup and those of a single function is that the semigroup dynamics is more complicated.  For instance, $F(G)$ and $J(G)$ need not be completely invariant and $J(G)$ may have interior points without being entire complex plane $\C,$ \cite{martin}. In \cite{dinesh5}, the authors  extended  the dynamics of a transcendental entire function on its escaping set to the dynamics of semigroups of transcendental entire functions on their escaping sets and initiated the study of escaping sets of semigroups of transcendental entire functions. 

In this paper, we have considered the dynamics of semigroups of transcendental entire functions using Fatou-Julia theory. We have provided some condition for the complete invariance of escaping set and Julia set of a class of transcendental semigroups. Some results on limit functions and postsingular set have been discussed. A class of hyperbolic and postsingularly bounded  transcendental semigroups  domains have been provided. We also provide some criterion for non existence of   wandering domains of  transcendental semigroups. 

\section{theorems and their proofs}\label{sec2}

Recall \cite[Definition 2.1]{dinesh5}, for a transcendental semigroup $G$, the escaping set of $G,$ denoted by $I(G)$ is defined as
\[I(G)=\{z\in\C\, |\,\text{every sequence in}\,G\,\text{has a subsequence which diverges to infinity at}\, z\}.\]
The following result provides backward invariance of $I(G)$

\begin{theorem}\label{sec2,thm1}
Let $G=[g_1, g_2\ldots]$ be an abelian transcendental semigroup. Then  $I(G)$ is backward invariant under $G.$
\end{theorem}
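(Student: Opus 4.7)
The plan is to unwind the definition of $I(G)$ directly. Fix $g\in G$ and a point $w$ with $g(w)\in I(G)$; the goal is to show $w\in I(G)$, that is, every sequence in $G$ admits a subsequence diverging to infinity at $w$.

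So let $\{h_n\}\subset G$ be arbitrary. First I would apply the hypothesis $g(w)\in I(G)$ to this very sequence: by the definition of $I(G)$, there exists a subsequence $\{h_{n_k}\}$ with $h_{n_k}(g(w))\to\infty$. Next I would invoke commutativity: since $G$ is abelian and is generated by $\{g_1,g_2,\ldots\}$, an easy induction on word length shows every pair of elements of $G$ commutes, so $h_{n_k}\circ g=g\circ h_{n_k}$. Rewriting the limit then gives
\[
g\bigl(h_{n_k}(w)\bigr)=h_{n_k}\bigl(g(w)\bigr)\longrightarrow\infty.
\]

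The last step is to conclude $h_{n_k}(w)\to\infty$ from $g(h_{n_k}(w))\to\infty$. This is the only spot that isn't purely formal, and it's a small compactness observation: the entire function $g$ is continuous on $\C$ and therefore maps any bounded set to a bounded set, so if $\{h_{n_k}(w)\}$ had a bounded subsequence, $\{g(h_{n_k}(w))\}$ would have a bounded (in fact convergent, after a further extraction) subsequence, contradicting divergence to $\infty$. Hence $h_{n_k}(w)\to\infty$, which is exactly the condition needed to place $w$ in $I(G)$.

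Since $g\in G$ and $w$ with $g(w)\in I(G)$ were arbitrary, this gives $g^{-1}(I(G))\subset I(G)$ for every $g\in G$, which is the backward invariance of $I(G)$ under $G$. The conceptual work is minimal; the essential point is that commutativity lets one push $g$ past each $h_{n_k}$, and the only non-algebraic step is the continuity argument in the last paragraph, which I would regard as the main (if mild) obstacle.
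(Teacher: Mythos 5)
Your proof is correct and uses essentially the same argument as the paper: commutativity lets you replace $h_{n_k}(g(w))$ by $g(h_{n_k}(w))$, and continuity of $g$ transfers the divergence/boundedness information. The only difference is that you argue directly while the paper proves the contrapositive ($w\notin I(G)\Rightarrow g(w)\notin I(G)$ via boundedness of the orbit sequence), which is the same idea in mirror image.
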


\begin{proof}
We need to show that $g^{-1}(I(G))\subset I(G)$ for all $g\in G.$ Suppose $w\notin I(G).$ Then there exist a  sequence $\{f_n\}$ in $G$ whose no subsequence  diverges to $\ity$ at $w.$ Consequently, all subsequences of $\{f_n\}$ are eventually bounded at $w.$  Let $g\in G$ and consider the sequence $\{f_n\circ g\}.$  Since all subsequences of $\{f_n\}$ are eventually bounded at $w$ and $g$ is continuous, therefore, all subsequences of $\{g\circ f_n\}$ are bounded at $w.$ As $G$ is abelian, $f_n\circ g=g\circ f_n$ for all $n\in\N.$ Thus all subsequences of the sequence $\{f_n\circ g\}$  are eventually bounded at $w.$ In other words, all subsequences of the sequence $\{f_n\}$ are eventually bounded at $g(w)$ and so $g(w)\notin I(G)$ for all $g\in G.$ Hence, $g^{-1}(I(G))\subset I(G)$ for all $g\in G$ and this completes the proof of the theorem.
\end{proof}

\begin{remark}\label{sec2,rem3}
In \cite[Theorem 4.1]{dinesh5}, it was shown that for a transcendental semigroup $G, I(G)$ is  forward invariant under $G.$ Theorem \ref{sec2,thm1}, in particular, establishes that for an abelian transcendental semigroup $G, I(G)$ is completely invariant.
\end{remark}
Even if a transcendental semigroup $G$ is non abelian, still $I(G)$ (and hence $\overline{I(G)}$) can be completely invariant. We first prove an elementary lemma.

\begin{lemma}\label{sec2,lem1}
Let $f$ be a transcendental entire function.
Then the closure of any forward invariant subset of $\C$ is forward invariant.
\end{lemma}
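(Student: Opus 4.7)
The plan is to use nothing more than the continuity of $f$, since forward invariance is a purely topological-dynamical condition and taking closures interacts well with continuous maps. Let $W \subset \C$ be forward invariant under $f$, so that $f(W) \subset W$. I want to establish $f(\overline{W}) \subset \overline{W}$.

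First I would fix an arbitrary point $z \in \overline{W}$. By definition of closure there exists a sequence $\{z_n\} \subset W$ with $z_n \to z$. Since $f$ is entire, it is continuous on $\C$, so $f(z_n) \to f(z)$. Next, the forward invariance hypothesis gives $f(z_n) \in f(W) \subset W$ for every $n$, so $f(z)$ is the limit of a sequence in $W$, whence $f(z) \in \overline{W}$.

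Because $z \in \overline{W}$ was arbitrary, this shows $f(\overline{W}) \subset \overline{W}$, which is precisely the forward invariance of $\overline{W}$ under $f$. There is no real obstacle here; the only thing to note is that the argument uses only continuity of $f$, so the statement in fact holds for any continuous self-map of $\C$, not just transcendental entire functions. This is presumably why the authors label the result as ``elementary'' and interpose it here purely as a stepping-stone toward the subsequent discussion of non-abelian semigroups whose escaping set (and its closure) is completely invariant.
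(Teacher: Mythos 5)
Your proof is correct and follows essentially the same argument as the paper: take $z\in\overline{W}$, approximate by a sequence in $W$, and use continuity of $f$ together with $f(W)\subset W$ to conclude $f(z)\in\overline{W}$. If anything, your write-up is slightly more careful than the paper's, which omits the explicit remark that $f(z_n)\in W$ before passing to the limit.
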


\begin{proof}
Suppose $A\subset\C$ is forward invariant and let $z\in\overline A.$ Then there exist a sequence $\{z_n\}$ in $A$ such that $z_n\to z.$ By the continuity of $f,$ $f(z_n)\to f(z).$ As $A$ is forward invariant, so $f(z)\in \overline A$ and hence $\overline A$ is forward invariant. 
\end{proof}

\begin{theorem}\label{sec2,thm2}
Let $G=[g_1,\ldots, g_n]$ be a finitely generated transcendental semigroup in which each $g_i,\;1\leq i\leq n$ is of bounded type. Then $\overline {I(G)}$ is completely invariant under $G.$
\end{theorem}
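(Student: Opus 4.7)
The plan is to decouple forward and backward invariance. Forward invariance of $\overline{I(G)}$ is essentially free: by \cite[Theorem 4.1]{dinesh5}, $I(G)$ is forward invariant under $G$, and Lemma \ref{sec2,lem1}, applied to each $g \in G$, upgrades this to forward invariance of the closure. All substantive work therefore lies in backward invariance.

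My strategy for the backward direction is to prove the identification $\overline{I(G)} = J(G)$; once this is in hand, backward invariance of $\overline{I(G)}$ is merely a restatement of the known backward invariance of $J(G)$ recorded after Definition \ref{sec1,defn1} (originating from \cite[Theorem 2.1]{poon1}). I would obtain the identification as two inclusions. For $\overline{I(G)} \subset J(G)$, since $J(G)$ is closed it suffices to prove $I(G) \subset J(G)$, and this is where both hypotheses enter: because $\mathcal B$ is closed under composition, every element of $G$ is itself of bounded type. Assuming for contradiction that $z \in F(G) \cap I(G)$, normality of $G$ near $z$ forces every sequence $\{h_m\} \subset G$ to have a locally uniformly convergent subsequence; the hypothesis $z \in I(G)$ forces at least one such limit to be $\infty$. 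A pigeonhole on the finite generating set $\{g_1,\ldots,g_n\}$ selects a further subsequence whose outermost factor is a fixed $g_i \in \mathcal B$, and the Eremenko--Lyubich exclusion of $\infty$ as a limit function for bounded-type maps delivers the desired contradiction. For the reverse inclusion $J(G) \subset \overline{I(G)}$, which is the semigroup analog of Eremenko's classical $J(f) \subset \overline{I(f)}$, I would appeal to the corresponding result in \cite{dinesh5}, or transfer Eremenko's Ahlfors five-islands argument to composition sequences in $G$.

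The step I expect to be the main obstacle is the inclusion $I(G) \subset J(G)$ in the non-abelian setting. The difficulty is that membership in $I(G)$ constrains \emph{every} sequence in $G$, not just iterates of a single generator, so an Eremenko--Lyubich-type argument must handle arbitrary compositions rather than powers of one map; the finite-generation hypothesis is precisely what permits the pigeonhole reduction back to the single-function theorem. Once this inclusion is secured, the assembly of forward invariance (via Lemma \ref{sec2,lem1}) and backward invariance (via $\overline{I(G)} = J(G)$ together with backward invariance of the Julia set) completes the complete invariance claim.
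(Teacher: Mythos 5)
Your overall architecture is sound and genuinely different from the paper's. The paper handles backward invariance of $\overline{I(G)}$ directly: given $w\notin\overline{I(G)}$, it takes a neighborhood $U$ disjoint from $\overline{I(G)}$, uses $I(G)\subset J(G)$ (bounded type) to place $U$ in $F(G)\subset F(g)$, then works with the \emph{single-function} escaping sets, invoking $I(g)\subset J(g)$ from \cite{EL} and the complete invariance of each $I(g)$, to conclude $g(w)\notin\overline{I(G)}$; only afterwards does it deduce complete invariance of $J(G)$ (Corollary \ref{sec2,cor1}) via $J(G)=\overline{I(G)}$. You invert this order: establish $\overline{I(G)}=J(G)$ first and let the known backward invariance of $J(G)$ (\cite[Theorem 2.1]{poon1}) do the work, with forward invariance coming from \cite[Theorem 4.1]{dinesh5} plus Lemma \ref{sec2,lem1} exactly as in the paper. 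That route is legitimate and arguably shorter, and it is not circular (backward invariance of $J(G)$ is independent of this theorem), \emph{provided} you obtain the identity $J(G)=\overline{I(G)}$ the same way the paper does, namely by citing \cite[Theorem 4.5(ii)]{dinesh5}, which is stated under precisely these hypotheses.

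The genuine gap is in your self-contained attempt at $I(G)\subset J(G)$. The Eremenko--Lyubich theorem excludes $\infty$ as a limit of the \emph{iterates} $f^n$ of a single bounded-type map on its Fatou set (equivalently $I(f)\cap F(f)=\emptyset$); it says nothing about sequences of the form $g_i\circ k_m$ with a fixed outer factor $g_i\in\mathcal B$ and arbitrary inner maps $k_m\in G$. Pigeonholing on the outermost generator therefore does not reduce to the single-function theorem: for instance with $g_i(z)=e^z\in\mathcal B$ and inner maps whose values march off to the right, $g_i\circ k_m\to\infty$ locally uniformly, so membership of the outer factor in $\mathcal B$ alone gives no contradiction. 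Any honest proof of $I(G)\subset J(G)$ must exploit the orbit structure of the whole composition (this is what is done in \cite{dinesh5}), not just the last map applied. The fix is simply to replace this step by the citation \cite[Theorem 4.5(ii)]{dinesh5} (or at least cite the inclusion $I(G)\subset J(G)$ for bounded-type semigroups from \cite{dinesh5}, which is also the level of justification the paper itself uses); with that substitution your argument goes through.
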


\begin{proof}
As $I(G)$ is forward invariant under $G,$ its closure is forward invariant under $G$ by Lemma \ref{sec2,lem1}. We now show that $\overline{I(G)}$ is backward invariant under $G,$ that is, $g^{-1}(\overline{I(G)})\subset \overline{I(G)}$ for all $g\in G.$ Let $w\notin\overline{I(G)}.$ Then there exist a neighborhood $U$ of $w$ such that $U\cap\overline {I(G)}=\emptyset.$ Since $G$ is of bounded type, $I(G)\subset J(G)$ and this  implies that $U\subset F(G)\subset F(g)$ for all $g\in G.$ As each $g\in G$ is of bounded type,  $I(g)\subset J(g)$ \cite{EL}, and hence  $U\cap I(g)=\emptyset$ for all $g\in G$ and so $w\notin \overline{I(g)}$ for all $g\in G.$ As a result,  $g(w)\notin \overline{I(g)}$ for all $g\in G.$ Therefore, $g(w)\notin\overline{I(G)}$ for all $g\in G$ and hence  $w\notin g^{-1}( \overline{I(G)})$ which proves the backward invariance of $\overline{I(G)}$ and hence the result.
\end{proof}

As a consequence, one obtains
\begin{corollary}\label{sec2,cor1}
Let $G=[g_1,\ldots, g_n]$ be a finitely generated transcendental semigroup in which each $g_i,\;1\leq i\leq n$ is of bounded type. Then $J(G)$ is completely invariant under $G.$
\end{corollary}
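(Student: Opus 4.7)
The plan is to deduce Corollary~\ref{sec2,cor1} directly from Theorem~\ref{sec2,thm2} by establishing the identity $J(G)=\overline{I(G)}$ under the bounded type hypothesis. Once this identity is available, the complete invariance of $J(G)$ follows immediately from that of $\overline{I(G)}$ proved in Theorem~\ref{sec2,thm2}, since complete invariance is preserved under equality of sets.

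The easy inclusion $\overline{I(G)}\subset J(G)$ rests on the fact that the class $\mathcal{B}$ is closed under composition, so every $g\in G$ is of bounded type and satisfies $I(g)\subset J(g)$ by Eremenko-Lyubich. This lifts to $I(G)\subset J(G)$ at the semigroup level (the inclusion already invoked in the proof of Theorem~\ref{sec2,thm2}); taking closures and using that $J(G)$ is closed then yields $\overline{I(G)}\subset J(G)$. For the reverse inclusion $J(G)\subset\overline{I(G)}$, I would recycle the key step from the proof of Theorem~\ref{sec2,thm2}: if $w\notin\overline{I(G)}$, choose an open neighborhood $U$ of $w$ disjoint from $\overline{I(G)}$; openness of $U$ combined with $U\cap I(G)=\emptyset$ and the bounded type hypothesis forces $U\subset F(G)$, whence $w\in F(G)$. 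Combining both inclusions gives $J(G)=\overline{I(G)}$, and Theorem~\ref{sec2,thm2} completes the proof.

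The main obstacle is the inclusion $J(G)\subset\overline{I(G)}$, which is the semigroup analog of Eremenko-Lyubich's theorem $J(f)=\overline{I(f)}$ for $f\in\mathcal{B}$. Unlike the single-function case, $I(G)$ is strictly more restrictive than $I(g)$ for any individual $g\in G$, so one must leverage the bounded type property of all generators simultaneously rather than applying Eremenko-Lyubich to a single function; the finite generation hypothesis is essential here, as it ensures that control over finitely many $g_i$ propagates to the whole semigroup $G=[g_1,\ldots,g_n]$.
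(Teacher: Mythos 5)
Your overall route is the same as the paper's: reduce the corollary to Theorem \ref{sec2,thm2} via the identity $J(G)=\overline{I(G)}$. The paper, however, does not prove this identity here; it simply cites an earlier result (Theorem 4.5(ii) of the second Kumar--Kumar semigroup paper, reference \cite{dinesh5}) which establishes $J(G)=\overline{I(G)}$ for such semigroups, and then applies Theorem \ref{sec2,thm2}. Your proposal instead tries to supply the identity itself, and that is where it breaks down.

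The gap is in your reverse inclusion $J(G)\subset\overline{I(G)}$. You take $w\notin\overline{I(G)}$, pick an open $U$ with $U\cap\overline{I(G)}=\emptyset$, and then assert that ``openness of $U$ combined with $U\cap I(G)=\emptyset$ and the bounded type hypothesis forces $U\subset F(G)$.'' That assertion \emph{is} the statement $J(G)\subset\overline{I(G)}$ in contrapositive form, so the argument is circular: no mechanism is given for why avoiding $I(G)$ keeps $U$ out of $J(G)$. Single-function Eremenko--Lyubich does not deliver it, because by Poon's result $J(G)=\overline{\bigcup_{g\in G}J(g)}$ and $J(g)=\overline{I(g)}$ for each bounded-type $g\in G$, so one would need $U$ to avoid $I(g)$ for \emph{every} $g\in G$, whereas the hypothesis only gives avoidance of the much smaller set $I(G)$ (recall $I(G)$ is an intersection-type condition over all sequences in $G$, and it is not even immediate that $I(G)\subset I(g)$, since the definition of $I(G)$ only yields escape along a subsequence of $\{g^n\}$). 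You correctly identify this inclusion as ``the main obstacle'' and note it is the semigroup analogue of $J(f)=\overline{I(f)}$ for $f\in\mathcal{B}$, but you then restate it rather than prove it; to close the gap you must either cite the known semigroup result $J(G)=\overline{I(G)}$ (as the paper does) or give a genuine proof of it, which is a substantive theorem and not a recycling of the neighborhood step from Theorem \ref{sec2,thm2}.
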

\begin{proof}
By \cite[Theorem 4.5(ii)]{dinesh5}, $J(G)=\overline{I(G)}$ and so Theorem \ref{sec2,thm2} gives the desired result.
\end{proof}

The next result gives a relation between the escaping set of a semigroup and its generators.

\begin{theorem}\label{sec2,thm3}
Let $G=[g_1,\ldots, g_k]$ be a finitely generated abelian transcendental semigroup. Then $I(G)=\cap_{i=1}^{k}g_i^{-1}(I(G)).$
\end{theorem}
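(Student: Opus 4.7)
The plan is to prove the identity by showing both inclusions, each of which follows directly from an invariance property of $I(G)$ already established in the paper or in \cite{dinesh5}.

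For the inclusion $I(G) \subset \bigcap_{i=1}^{k} g_i^{-1}(I(G))$, I would invoke the forward invariance of $I(G)$ under $G$, recorded in Remark \ref{sec2,rem3} (citing \cite[Theorem 4.1]{dinesh5}). Specifically, if $z \in I(G)$, then $g_i(z) \in I(G)$ for each $i = 1, \ldots, k$, which by definition of preimage means $z \in g_i^{-1}(I(G))$ for every $i$, hence $z$ lies in the intersection. This direction does not use abelianness.

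For the reverse inclusion $\bigcap_{i=1}^{k} g_i^{-1}(I(G)) \subset I(G)$, I would apply Theorem \ref{sec2,thm1}: since $G$ is abelian, $I(G)$ is backward invariant, so $g_i^{-1}(I(G)) \subset I(G)$ for each $i$. In fact the intersection is a subset of any single $g_i^{-1}(I(G))$, hence of $I(G)$. Combining the two inclusions yields the equality.

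There is no real obstacle here; the content is essentially bookkeeping, combining the two invariance directions already proved. The only hypothesis that does actual work is the abelian assumption, which is what upgrades forward invariance (free) to complete invariance (Theorem \ref{sec2,thm1}); finite generation is used only to make the intersection a finite one over the generators. I would therefore keep the proof to a few lines and simply cite Remark \ref{sec2,rem3} and Theorem \ref{sec2,thm1} for the two inclusions.
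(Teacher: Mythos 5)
Your proof is correct and follows essentially the same route as the paper: forward invariance of $I(G)$ gives $I(G)\subset\bigcap_{i=1}^{k}g_i^{-1}(I(G))$, and the abelian hypothesis gives the reverse inclusion via backward invariance. The only cosmetic difference is that you cite Theorem \ref{sec2,thm1} directly, whereas the paper repeats that argument verbatim ``for the sake of completeness.''
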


\begin{proof}
As $I(G)$ is forward invariant under $G,$  $g_i(I(G))\subset I(G),\;1\leq i\leq k$ which implies that $I(G)\subset \cap_{i=1}^{k}g_i^{-1}(I(G)).$
The argument for proving backward implication is similar to the one used in the proof of Theorem \ref{sec2,thm1}. We give it for the sake of  completeness.
Suppose $w\notin I(G).$ Then there is a  sequence $\{f_n\}$ in $G$
such that all its subsequences  are eventually bounded at $w.$  For $g_i\in G,\;1\leq i\leq k$  consider the sequence $\{f_n\circ g_i\}.$  Since  all subsequences of $\{f_n\}$ are eventually bounded at $w,$ and $g_i$ is continuous therefore, all subsequences of $\{g_i\circ f_n\}$ are bounded at $w.$ As $G$ is abelian, $f_n\circ g_i=g_i\circ f_n$ for all $n\in\N$ and so all subsequences of the sequence $\{f_n\circ g_i\}$  are eventually bounded at $w.$ Thus all subsequences of the sequence $\{f_n\}$ are eventually bounded at $g_i(w)$ and hence  $g_i(w)\notin I(G)$ for all $1\leq i\leq k.$ This shows that $w\notin g_i^{-1}(I(G))$ for all $1\leq i\leq k$ and  $\cap_{i=1}^{k}g_i^{-1}(I(G))\subset I(G)$. This completes the proof of the theorem.
\end{proof}

Recall, \cite[p.\ 61]{Hua} for a transcendental meromorphic function $g,$ a function $\psi(z)$ is a limit function of $(g^n)$ on a  component $V\subset F(g)$ if there is some subsequence of $(g^n)$ which converges locally uniformly on $V$ to $\psi.$ Denote by $\mathfrak{L}(U)$ all such limit functions. The following result gives a criterion for connectivity of $J(G):$

\begin{theorem}\label{sec2,thm4}
Let $G=[g_1,\ldots, g_n]$ be a finitely generated abelian transcendental semigroup. If for each $i,\;1\leq i\leq n,$ every Fatou component of  $g_i$ is bounded and $\ity$ is not a limit function of any sequence in $G$ in a component of $F(G),$ then $J(G)\subset\C$ is connected. 
\end{theorem}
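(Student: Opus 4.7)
The plan is to reduce connectedness of $J(G)$ to the standard topological fact that the complement in $\ti\C$ of a disjoint family of bounded open topological disks is connected. Concretely, I would show that every component $U$ of $F(G)$ is bounded and simply connected in $\C$.

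Boundedness of $U$ is immediate: since each $g_i\in G$ gives $F(G)\subset F(g_i)$, the component $U$ sits inside some Fatou component $V_i$ of $g_i$, and $V_i$ is bounded by hypothesis.

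Simple connectedness of $U$ is the heart of the argument, and I would prove it by contradiction. Suppose $U$ is not simply connected; choose a Jordan curve $\gamma\subset U$ whose bounded complementary component $D\subset\C$ contains a point $z_0\notin U$. Let $\{f_k\}$ be an arbitrary sequence in $G$. Since $U\subset F(G)$ the family $G$ is normal on $U$, and since the hypothesis forbids $\ity$ as a limit function on any component of $F(G)$, some subsequence $\{f_{k_j}\}$ converges locally uniformly on $U$ to a holomorphic map $\psi:U\to\C$. In particular the compact set $\gamma\subset U$ receives a uniform bound $|f_{k_j}|\le M$. As each $f_{k_j}$ is entire, the maximum modulus principle applied on the compact set $\overline{D}$ (on whose boundary $\gamma$ we have $|f_{k_j}|\le M$) forces $|f_{k_j}|\le M$ throughout $D$. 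Montel's theorem then extracts a sub-subsequence normal on $D$. Because $\{f_k\}$ was arbitrary, $G$ is normal on $D$, so $D\subset F(G)$. But $D\cup\gamma$ is connected and meets $U$ along $\gamma$, so $D$ lies in the same component of $F(G)$ as $U$; that is $D\subset U$, contradicting $z_0\in D\setminus U$. Hence $U$ is simply connected.

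With every component of $F(G)$ bounded and simply connected in $\C$, each is a topological disk, and $J(G)=\ti\C\setminus F(G)$ is connected by the topological fact recalled at the outset. The main obstacle is the bridge step, propagating the $\gamma$-bound into the interior $D$: here entireness of each iterate in $G$ is essential, since it is what lets the maximum modulus principle override the possible presence of Julia points inside $D$. A secondary care-point is that the ``$\ity$ is not a limit function'' hypothesis is exactly what ensures $\psi$ is finite-valued, so that a genuine uniform bound on $\gamma$ is available in the first place; without it, $|f_{k_j}|$ could blow up along $\gamma$ and the maximum modulus step would be vacuous.
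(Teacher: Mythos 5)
Your proposal is correct, and it follows the same overall reduction as the paper -- every component of $F(G)$ is simply connected, hence $J(G)$ is connected -- but where the paper disposes of both steps by citation, you prove the substantive one from scratch. The paper's proof is two lines: it observes that every $g\in G$ has only bounded Fatou components, invokes Theorem 4.1 of \cite{dinesh1} to get simple connectedness of the components of $F(G)$ from the no-$\ity$-limit-function hypothesis, and then invokes Theorem 2.11 of \cite{dinesh6} for the passage from simply connected Fatou components to connected Julia set. Your Baker-style argument (normality on $U$ plus exclusion of $\ity$ gives a finite holomorphic limit, hence a uniform bound on a Jordan curve $\gamma\subset U$; the maximum modulus principle pushes the bound into the interior $D$; Montel then gives normality on $D$, so $D\subset F(G)$ and in fact $D\subset U$) is essentially the mechanism behind the cited Theorem 4.1, so you have made the proof self-contained rather than found a new route; the final topological fact you assert is exactly what \cite[Theorem 2.11]{dinesh6} supplies, so stating it without proof is on par with the paper. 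One point your version makes visible that the paper's does not: the simple-connectedness argument uses neither the abelian hypothesis, nor finite generation, nor the boundedness of the generators' Fatou components -- boundedness of the components of $F(G)$ enters only in your last (and strictly speaking dispensable) cosmetic step, whereas the paper's route has to begin with the not-quite-obvious observation that \emph{every} $g\in G$ has only bounded Fatou components in order to apply its citation. So your argument is slightly more general as well as more elementary; the paper's buys brevity at the cost of leaning on the earlier semigroup papers.
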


\begin{proof}
Observe that for all $g\in G,$ every component of $F(g)$ is bounded. As $\ity$ is not a limit function of any sequence in $G$ in a component of $F(G),$ from \cite[Theorem 4.1]{dinesh1}, every component of $F(G)$ is simply connected and therefore, $J(G)\subset\C$ is connected using \cite[Theorem 2.11]{dinesh6}.
\end{proof}

\begin{remark}\label{sec2,rem1}
For all $g\in G,$ every Fatou component of $g$ is simply connected, and therefore, $J(g)\subset\C$ is connected.
\end{remark}

\begin{theorem}\label{sec2,thm5}
Let $f$ be a transcendental entire function of period of $c$ and let $g=f^m+c$ for some $m\in\N.$ Let $U\subset F(f)$ be an invariant component and $\phi$ be a limit function of $(f^n)$ on $U.$  Then $\phi$ is a limit function of $((f\circ g)^n)$ on $U$ and $\phi+c$ is a limit function of $((g\circ f)^n)$ on $U.$
\end{theorem}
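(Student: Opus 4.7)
The approach is computational, using the periodicity of $f$ to reduce iterates of $f\circ g$ and $g\circ f$ to iterates of $f$. First I would compute, using $f(w+c)=f(w)$,
\[
(f\circ g)(z)=f\bigl(f^m(z)+c\bigr)=f^{m+1}(z),\qquad (g\circ f)(z)=f^m(f(z))+c=f^{m+1}(z)+c.
\]
Periodicity transfers to all iterates, $f^{m+1}(w+c)=f^m(f(w+c))=f^{m+1}(w)$, so by induction one obtains $(f\circ g)^n=f^{n(m+1)}$ directly, while the inductive step
\[
(g\circ f)^{n+1}(z)=f^{m+1}\bigl(f^{n(m+1)}(z)+c\bigr)+c=f^{(n+1)(m+1)}(z)+c
\]
gives $(g\circ f)^n(z)=f^{n(m+1)}(z)+c$.

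With these identifications, I would then turn to the limit-function claims. Fix a subsequence $f^{n_k}\to\phi$ locally uniformly on $U$. Since $F(f\circ g)=F(f^{m+1})=F(f)$, the component $U$ is also an invariant Fatou component of $f\circ g$, and the family $(f^{n(m+1)})$ is normal on $U$. The task is to exhibit a subsequence $(l_j)$ with $f^{l_j(m+1)}\to\phi$ on $U$. I would argue this by invoking the classification of invariant Fatou components of transcendental entire functions: in the attracting, parabolic, and Baker cases the full sequence $(f^n)$ already converges to a unique limit function ($p$ or $\ity$), so $\phi$ equals that limit and every subsequence, including $(f^{n(m+1)})$, converges to $\phi$; in the Siegel case, $\phi$ corresponds (in linearizing coordinates) to rotation by some angle $\theta$, and since $(m+1)\al$ is irrational whenever the rotation number $\al$ of $f$ on $U$ is irrational, the orbit $\{n(m+1)\al \bmod 1\}$ is dense in $[0,1)$, so one can select $l_j$ with $l_j(m+1)\al\to\theta$, yielding $f^{l_j(m+1)}\to\phi$.

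The second assertion is then immediate: once $f^{l_j(m+1)}\to\phi$ locally uniformly on $U$, adding $c$ gives
\[
(g\circ f)^{l_j}(z)=f^{l_j(m+1)}(z)+c\longrightarrow\phi(z)+c
\]
locally uniformly on $U$, so $\phi+c$ is a limit function of $((g\circ f)^n)$ on $U$.

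The main obstacle is precisely this limit-function identification: since the goal is convergence to the \emph{specific} $\phi$, not to an arbitrary limit function of $(f^{n(m+1)})$, one cannot simply extract a convergent subsequence of the (normal) family $(f^{n(m+1)})$ and hope it is $\phi$. The Siegel case is the genuinely delicate one---a naive decomposition $f^{n_k(m+1)}=f^{mn_k}\circ f^{n_k}$ typically delivers a rotation by $(m+1)\theta$ rather than $\theta$---and the density argument exploiting the irrationality of $(m+1)\al$ is what is needed to recover $\phi$ itself.
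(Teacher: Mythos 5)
Your proposal is correct, and at the decisive step it takes a genuinely different (and more careful) route than the paper. Both arguments begin with the same reduction: periodicity gives $(f\circ g)^n=f^{n(m+1)}$ and $(g\circ f)^n=f^{n(m+1)}+c$, so the whole issue is to show that the given $\phi$ is a limit of some subsequence of $(f^{n(m+1)})$. The paper settles this in one line by taking the subsequence $(n_i)$ with $f^{n_i}\to\phi$ and asserting $\lim_i f^{n_i(m+1)}=\phi$ on $U$ --- precisely the step you identify as delicate. Indeed, on a Siegel disc, if $n_i\al\to\theta \pmod 1$ then in linearizing coordinates $f^{n_i(m+1)}$ converges to the rotation by $(m+1)\theta$, not by $\theta$, so the paper's choice of subsequence is only justified when the limit function is unique (attracting, parabolic and Baker cases) or when $m\theta\in\Z$. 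Your argument --- classify the invariant component, note that in the attracting/parabolic/Baker cases the full sequence $(f^n)$ converges so any subsequence works, and in the Siegel case use irrationality of $(m+1)\al$ and density of $\{n(m+1)\al \bmod 1\}$ to select a fresh subsequence $(l_j)$ with $l_j(m+1)\al\to\theta$, hence $f^{l_j(m+1)}\to\phi$ --- supplies exactly the justification the paper omits, at the cost of invoking the classification of invariant Fatou components and the linearization on Siegel discs; the paper's approach buys brevity but leaves a genuine gap in the Siegel case. The final step, adding $c$ along the same subsequence to obtain $\phi+c$ as a limit function of $((g\circ f)^n)$, is common to both; your remark that $F(f\circ g)=F(f^{m+1})=F(f)$, so that $U$ remains an invariant component for $f\circ g$, is a useful point of care that the paper passes over silently.
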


\begin{proof}
Observe that for all $n\in\N, (f\circ g)^n(z)=f^{n(m+1)}(z)$ and $(g\circ f)^n(z)=f^{n(m+1)}(z)+c.$ As $\phi\in\mathfrak{L}(U)$ for $(f^n),$ there exist a subsequence $(f^{n_i})$ of    $(f^n)$ with $\lim f^{n_i}(z)=\phi(z)$ on $U.$ For $z\in U,\, \lim(f\circ g)^{n_i}(z)=\lim f^{n_i(m+1)}(z)=\phi(z)$ which implies that  $\phi$ is a limit function of $((f\circ g)^n)$ on $U.$ 
Similarly, it can be seen that $\lim(g\circ f)^{n_i}(z)=\phi(z)+c$\, on $U$ and hence the result.
\end{proof}

\section{postsingular set}\label{sec3}

For an entire function $f,$ let $E(f)=\cup_{n\geq 0}f^{n}(\text{Sing}(f^{-1}))$ and $E^\prime(f)$ be the derived set of $E(f),$ that is, the set of finite limit points of $E(f).$ Then the union of $E(f)$ and $E'(f)$ is the postsingular set  $\mathcal P(f),$ that is, 
\[\mathcal P(f)=\overline{\B(\bigcup_{n\geq 0}f^n(\text{Sing}(f^{-1}))\B)}.\]

 For a transcendental semigroup $G$, let 
\[
\mathcal P(G)=\overline{\B(\bigcup_{f\in G}{\text{Sing}}(f^{-1})\B)}.
\]

The first theorem of this section gives  relation between postsingular set of composite of two entire functions with those of its factors.
\begin{lemma}\label{sec2,lem2}
Let $f$ and $g$ be permutable transcendental entire functions of bounded type. Suppose $f(\text{Sing}(g^{-1}))\subset\text{Sing}(g^{-1})$ and $g(\text{Sing}(f^{-1}))\subset\text{Sing}(f^{-1}).$ Then $\mathcal P(f\circ g)\subset \mathcal P(f)\cup\mathcal P(g).$ 
\end{lemma}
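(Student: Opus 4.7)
The plan is to reduce the statement to the standard inclusion
\[\text{Sing}((f\circ g)^{-1})\subset \text{Sing}(f^{-1})\cup f(\text{Sing}(g^{-1})),\]
which I regard as the essential technical input. For critical values this follows from the chain rule $(f\circ g)'=f'(g)\cdot g'$: a critical point $w$ of $f\circ g$ either satisfies $g'(w)=0$, whence $f(g(w))\in f(\text{Sing}(g^{-1}))$, or it satisfies $f'(g(w))=0$, whence $f(g(w))\in\text{Sing}(f^{-1})$. For asymptotic values one takes a path $\Gamma\to\ity$ along which $f\circ g\to\zeta$: either $g$ accumulates at $\ity$ along some tail subcurve, forcing $\zeta$ to be an asymptotic value of $f$, or $g$ accumulates at a finite asymptotic value $\xi$ of $g$, in which case continuity gives $\zeta=f(\xi)\in f(\text{Sing}(g^{-1}))$. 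Boundedness of $\text{Sing}(f^{-1})$ and $\text{Sing}(g^{-1})$ (from bounded type of $f$ and $g$) keeps the inclusion valid after taking closures.

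Next I would exploit permutability. Since $f\circ g=g\circ f$, one has $(f\circ g)^n=f^n\circ g^n$ and $f^m\circ g^k=g^k\circ f^m$ for all $m,k,n\in\N$. Applying $(f\circ g)^n$ to the displayed inclusion,
\[(f\circ g)^n(\text{Sing}((f\circ g)^{-1}))\subset f^n(g^n(\text{Sing}(f^{-1})))\cup f^{n+1}(g^n(\text{Sing}(g^{-1}))).\]
For the first term, the hypothesis $g(\text{Sing}(f^{-1}))\subset\text{Sing}(f^{-1})$ iterated $n$ times gives $g^n(\text{Sing}(f^{-1}))\subset\text{Sing}(f^{-1})$, so this piece lies in $f^n(\text{Sing}(f^{-1}))\subset\mathcal P(f)$. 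For the second, commuting $f^{n+1}$ past $g^n$ rewrites it as $g^n(f^{n+1}(\text{Sing}(g^{-1})))$, and the hypothesis $f(\text{Sing}(g^{-1}))\subset\text{Sing}(g^{-1})$ iterated gives $f^{n+1}(\text{Sing}(g^{-1}))\subset\text{Sing}(g^{-1})$, so this piece lies in $g^n(\text{Sing}(g^{-1}))\subset\mathcal P(g)$.

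Taking the union over $n\geq 0$ and then closing yields $\mathcal P(f\circ g)\subset\overline{\mathcal P(f)\cup\mathcal P(g)}=\mathcal P(f)\cup\mathcal P(g)$, since each postsingular set is closed by definition. The main obstacle is justifying the singular-values inclusion in the asymptotic-value case, where one must carefully analyze the behavior of $g(\Gamma)$ along the tail of the path; once that inclusion is in place, the rest of the argument is bookkeeping using permutability together with the two invariance hypotheses.
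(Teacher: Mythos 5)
Your proof is correct and follows essentially the same route as the paper: both rest on the inclusion $\text{Sing}((f\circ g)^{-1})\subset \text{Sing}(f^{-1})\cup f(\text{Sing}(g^{-1}))$ (which the paper takes from Bergweiler--Wang, while you sketch its proof), then use $(f\circ g)^n=f^n\circ g^n$, the iterated invariance hypotheses $g^n(\text{Sing}(f^{-1}))\subset\text{Sing}(f^{-1})$ and $f^{n+1}(\text{Sing}(g^{-1}))\subset\text{Sing}(g^{-1})$, and closure of the postsingular sets. Your splitting into $f^n(g^n(\text{Sing}(f^{-1})))$ and $g^n(f^{n+1}(\text{Sing}(g^{-1})))$ is exactly the paper's decomposition.
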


\begin{proof}
Since $f(\text{Sing}(g^{-1}))\subset\text{Sing}(g^{-1}),$ it is easy to see that $f^n(\text{Sing}(g^{-1}))\subset\text{Sing}(g^{-1})$ for all $n\in\N.$ Similarly, $g^n(\text{Sing}(f^{-1}))\subset\text{Sing}(f^{-1})$ for all $n\in\N.$ Using permutability of $f$ and $g,$ we have 
\begin{equation}\label{sec2,eq1}
\begin{split}\notag
\mathcal P(f\circ g)
&=\overline{\B(\bigcup_{n\geq 0}(f\circ g)^n(\text{Sing}({f\circ g})^{-1})\B)}\\
&\subset\overline{\B(\bigcup_{n\geq 0}(f^n(g^n(\text{Sing}(f^{-1})\cup f(\text{Sing}(g^{-1})))))\B)}\\
&=\overline{\B(\bigcup_{n\geq 0}(f^n(g^n(\text{Sing}(f^{-1}))))\cup (g^n(f^{n+1}(\text{Sing}(g^{-1}))))\B)}\\
&\subset\overline{\B(\bigcup_{n\geq 0}(f^n(\text{Sing}(f^{-1})))\B)}\cup\overline{\B(\bigcup_{n\geq 0}(g^n(\text{Sing}(g^{-1})))\B)}\\
&=\mathcal P(f)\cup\mathcal P(g).\qedhere
\end{split}
\end{equation}
\end{proof}

\begin{remark}\label{sec2,rem2}
It can be seen by an induction argument that if $g_1,\ldots, g_n$ are permutable transcendental entire functions of bounded type satisfying $g_i(\text{Sing}(g_j)^{-1})\subset\text{Sing}(g_j)^{-1},$ for all $1\leq i, j\leq n,\, i\neq j,$ then $\mathcal P(g_1\circ\cdots\circ g_n)\subset\bigcup_{i=1}^{n}\mathcal P(g_i).$
\end{remark}


Recall that an entire function $f$ is called \emph{hyperbolic} if the postsingular set $\mathcal P(f)$ is a compact subset of $F(f).$ For instance, $e^{\la z},\,0<\la<\frac{1}{e}$ are examples of hyperbolic entire functions. A transcendental semigroup $G$ is called hyperbolic if each $g\in G$ is hyperbolic \cite{dinesh5}. Also  an entire function $f$ is called \emph{postsingularly bounded} if the postsingular set $\mathcal P(f)$ is bounded. A transcendental semigroup $G$ is called postsingularly bounded if each $g\in G$ is postsingularly bounded \cite{dinesh5}.
The following result provides a class of hyperbolic transcendental semigroups:

\begin{theorem}\label{sec2,thm6}
Let $G=[g_1,\ldots, g_n]$ be a finitely generated abelian transcendental semigroup in which each generator is of bounded type. Suppose that for each $i, j,\;1\leq i, j\leq n,\, i\neq j$ $g_i(\text{Sing}(g_j)^{-1})\subset\text{Sing}(g_j)^{-1}.$ If the generators of $G$ are hyperbolic, then so is $G.$
\end{theorem}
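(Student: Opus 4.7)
The plan is to verify the two defining conditions of hyperbolicity, namely that $\mathcal P(g)$ is compact and $\mathcal P(g)\subset F(g)$, for every $g\in G$. Using the abelian property, an arbitrary $g\in G$ can be rewritten as $g=g_1^{a_1}\circ\cdots\circ g_n^{a_n}$ with $a_i\geq 0$ not all zero. Since $\mathcal B$ is closed under composition, $g$ itself is of bounded type. The iterates $g_i^{a_i}$ pairwise commute, and using the standard identity $\text{Sing}(g_j^{-a_j})=\bigcup_{k=0}^{a_j-1}g_j^k(\text{Sing}(g_j^{-1}))$ together with the hypothesis $g_i(\text{Sing}(g_j^{-1}))\subset\text{Sing}(g_j^{-1})$ (for $i\neq j$), one checks that $g_i^{a_i}(\text{Sing}(g_j^{-a_j}))\subset\text{Sing}(g_j^{-a_j})$. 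Applying Remark \ref{sec2,rem2} to the family $g_1^{a_1},\ldots,g_n^{a_n}$ and again invoking the singular-value identity yields
\[
\mathcal P(g)\;\subset\;\bigcup_{i=1}^{n}\mathcal P(g_i^{a_i})\;\subset\;\bigcup_{i=1}^{n}\mathcal P(g_i).
\]

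Compactness of $\mathcal P(g)$ is then immediate: each $\mathcal P(g_i)$ is compact by hyperbolicity of the generator $g_i$, so their finite union $K:=\bigcup_i\mathcal P(g_i)$ is compact, and $\mathcal P(g)$ is a closed subset of $K$.

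To obtain $\mathcal P(g)\subset F(g)$, I would first establish that $K$ is forward invariant under every element of $G$. Using commutativity and the hypothesis,
\[
g_i\bigl(g_j^m(\text{Sing}(g_j^{-1}))\bigr)=g_j^m\bigl(g_i(\text{Sing}(g_j^{-1}))\bigr)\subset g_j^m(\text{Sing}(g_j^{-1})),
\]
and taking the union over $m$ and closing up gives $g_i(\mathcal P(g_j))\subset\mathcal P(g_j)$. Since $G$ is generated by the $g_i$'s, this yields $g(K)\subset K$ for every $g\in G$, so the $g$-orbit of any point of $K$ remains inside the compact set $K\subset\bigcup_i F(g_i)$. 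I would then choose a precompact open neighborhood $V$ of $K$ with $\overline V\subset\bigcup_iF(g_i)$ and argue, using this forward invariance together with the bounded-type property of $g$, that $\{g^n\}$ is normal on $V$ by Montel's theorem. This would give $K\subset V\subset F(g)$ and hence $\mathcal P(g)\subset F(g)$.

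The step I anticipate as the main obstacle is this last normality argument: producing an open neighborhood $V$ of $K$ on which $\{g^n\}$ is normal, rather than merely checking that individual orbits of points in $K$ stay bounded. The bounded-type assumption is indispensable here, since by the Eremenko--Lyubich theorem $J(g)=\overline{I(g)}$, so it suffices to prove $K\cap\overline{I(g)}=\emptyset$; this should follow by exploiting the forward invariance of $K$ inside each Fatou set $F(g_i)$ together with the absence of singular values of $g$ outside a bounded set.
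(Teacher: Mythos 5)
Your first half is sound and coincides with the paper's: writing $g=g_1^{a_1}\circ\cdots\circ g_n^{a_n}$, applying Remark \ref{sec2,rem2} (the paper also uses $\mathcal P(f^k)=\mathcal P(f)$ from \cite{baker1}) to get $\mathcal P(g)\subset\bigcup_{i=1}^n\mathcal P(g_i)=:K$, and concluding compactness of $\mathcal P(g)$ as a closed subset of the compact set $K$. The gap is exactly where you anticipated it, in the step $\mathcal P(g)\subset F(g)$, and your two proposed routes do not close it. Forward invariance of $K$ under $g$ only shows that every point of $K$ has bounded $g$-orbit, i.e.\ $K\cap I(g)=\emptyset$; it does not give $K\cap\overline{I(g)}=\emptyset$, and since for bounded-type $g$ one has $J(g)=\overline{I(g)}$, the set you must avoid is full of points with bounded orbits (repelling periodic points and their preimages, for instance), so ``orbits stay in the compact set $K$'' is simply not enough to place $K$ in $F(g)$. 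The Montel variant has the same defect: to get normality of $\{g^m\}$ on an open neighborhood $V\supset K$ you would need control of the images $g^m(V)$ (omitted values, or containment in a fixed hyperbolic region), and forward invariance of the smaller set $K$ provides no such control of a neighborhood.

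What actually closes the argument in the paper is a fact you never invoke: for an abelian semigroup with bounded-type generators, the Fatou sets of all elements coincide, $F(g_i)=F(g)$ for every $g\in G$ (cited there as Theorem 5.9 of \cite{dinesh1}; it rests on the equality of Julia sets of permutable bounded-type entire functions in the spirit of \cite{berg2}). With that equality the containment is immediate: $\mathcal P(g)\subset K\subset\bigcup_{i=1}^nF(g_i)=F(g)$, since each $\mathcal P(g_i)\subset F(g_i)$ by hyperbolicity of the generators. No normality or escaping-set argument is needed at all, and your forward-invariance computation $g_i(\mathcal P(g_j))\subset\mathcal P(g_j)$, while correct, becomes superfluous. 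So to repair your proof, replace the final paragraph by an appeal to (or a proof of) the equality of the Fatou sets $F(g_i)=F(g)$ for permutable bounded-type functions; without some statement of that strength the conclusion $\mathcal P(g)\subset F(g)$ does not follow from what you have established.
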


\begin{proof}
 $\mathcal P(g_1\circ\cdots\circ g_n)\subset\bigcup_{i=1}^{n}\mathcal P(g_i)$ by Remark \ref{sec2,rem2}. Using the permutability of each $g_i,\,1\leq i\leq n,$ any $g\in G$ can be represented as $g=g_1^{l_1}\circ\cdots\circ g_n^{l_n}$ which implies that $\mathcal P(g)\subset\bigcup_{i=1}^{n}\mathcal P(g_i^{l_i}).$ Also for any entire function $f,$  $\mathcal P(f^k)=\mathcal P(f)$ \cite{baker1}, therefore, $\mathcal P(g)\subset\bigcup_{i=1}^{n}\mathcal P(g_i).$  Using the hyperbolicity of the generators, $\bigcup_{i=1}^{n}\mathcal P(g_i)$ is a compact subset of $\C$ which further implies that $\mathcal P(g)\subset\C$ is compact. As $F(g_i)=F(g)$\, $1\leq i\leq n,$\, \cite[Theorem 5.9]{dinesh1}, it is easily seen that $\mathcal P(g)\subset\bigcup_{i=1}^{n}\mathcal P(g_i)\subset\bigcup_{i=1}^{n}F(g_i)$ and hence $\mathcal P(g)\subset F(g).$ Thus $\mathcal P(g)$ is a compact subset of $F(g)$ and so $g$ is hyperbolic. This completes the proof of the theorem.
\end{proof}

\begin{remark}\label{sec2,rem4}
Under the hypothesis of Theorem \ref{sec2,thm6}, if the generators are postsingularly bounded, then so is $G.$
\end{remark}
 Given two permutable entire functions $f$ and $g$ of bounded type, we now establish a relation between $I(f), I(g)$ and $I(f\circ g).$

\begin{theorem}\label{sec2,thm6'}
 If $f$ and $g$ are two permutable entire functions of bounded type then  $I(f\circ g)\subset I(f)\cup I(g).$
\end{theorem}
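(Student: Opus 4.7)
I would argue by contrapositive: suppose $z\notin I(f)\cup I(g)$ and show $(f\circ g)^n(z)\not\to\ity$. Permutability of $f$ and $g$ gives $(f\circ g)^n=f^n\circ g^n=g^n\circ f^n$, so both factorizations are available. Since $z\notin I(g)$, the sequence $\{g^n(z)\}$ does not tend to $\ity$, hence some subsequence $g^{n_k}(z)\to w$ with $w\in\C$. The goal is to use the bounded-type hypothesis on $f$ to exhibit a bounded subsequence of $(f\circ g)^n(z)$, contradicting escape.

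The crucial leverage is the Eremenko--Lyubich inclusion $I(f)\subset J(f)$ for $f\in\mathcal{B}$, which prevents any component of $F(f)$ from lying in $I(f)$; equivalently, $\{f^n\}$ admits no limit function identically $\ity$ on a component of $F(f)$. If $w\in F(f)$, extract a further subsequence along which $f^{n_{k_j}}$ converges locally uniformly in a neighborhood $V$ of $w$ to a limit $\phi$; by the preceding remark, $\phi$ is holomorphic and finite-valued. Since $g^{n_{k_j}}(z)\to w$, eventually $g^{n_{k_j}}(z)\in V$, and uniform convergence combined with continuity of $\phi$ gives
\[
(f\circ g)^{n_{k_j}}(z)=f^{n_{k_j}}(g^{n_{k_j}}(z))\to\phi(w)\in\C,
\]
contradicting $(f\circ g)^n(z)\to\ity$. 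Thus every bounded limit point of $\{g^n(z)\}$ must lie in $J(f)$. The symmetric argument using $(f\circ g)^n=g^n\circ f^n$ and $g\in\mathcal{B}$ shows that every bounded limit point of $\{f^n(z)\}$ must lie in $J(g)$.

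The remaining and hardest case is when every bounded accumulation point of $\{f^n(z)\}$ lies in $J(g)$ while every bounded accumulation point of $\{g^n(z)\}$ lies in $J(f)$. My plan is to pass to a common subsequence $\ell_k$ with $f^{\ell_k}(z)\to u'\in\C$ and $g^{\ell_k}(z)\to w'\in\C$, and exploit permutability --- which via $f\circ g=g\circ f$ forces compatibility between the periodic and preperiodic structures of $f$ and $g$ --- to reach a contradiction with $(f\circ g)^{\ell_k}(z)\to\ity$. The main obstacle is precisely this last step, because normality is no longer available on the Julia set and one has to translate the orbital compatibility enforced by permutability into an actual contradiction; the preceding normality-plus-bounded-type reduction, by contrast, disposes of the generic case cleanly.
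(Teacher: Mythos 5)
Your proposal is not a complete proof: the case you yourself identify as ``the remaining and hardest case'' --- when every finite accumulation point of $\{f^n(z)\}$ lies in $J(g)$ and every finite accumulation point of $\{g^n(z)\}$ lies in $J(f)$ --- is left as a plan (``pass to a common subsequence and exploit permutability to reach a contradiction'') with no actual argument, and this is precisely where all the difficulty sits, since normality is unavailable there. Moreover, even your ``generic'' case has a gap: from $I(f)\subset J(f)$ for $f\in\mathcal{B}$ you infer that $\{f^n\}$ admits no limit function identically $\infty$ on a component of $F(f)$, but these are not equivalent. The inclusion only says that no \emph{full} orbit of a Fatou point escapes; a \emph{subsequence} $f^{n_k}$ may still tend to $\infty$ locally uniformly on a Fatou component (this happens on oscillating wandering domains, which do occur in class $\mathcal{B}$). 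Since your subsequence $n_k$ is dictated by the convergence $g^{n_k}(z)\to w$, you cannot rule out that it is exactly a subsequence along which $f^{n_k}\to\infty$ near $w$, and then no contradiction with $(f\circ g)^{n_k}(z)\to\infty$ arises. So the normality argument only disposes of the case where $w$ lies in a component on which $\infty$ is not a subsequential limit (e.g.\ a preperiodic component), not of the full Fatou case.

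For comparison, the paper does not attempt this orbit-by-orbit analysis at all: it invokes the identity $J(f)=J(g)=J(f\circ g)$ for permutable functions of bounded type (from the authors' earlier work) together with $I(f\circ g)\subset J(f\circ g)$, since $f\circ g\in\mathcal{B}$, and argues by contrapositive that a point outside $I(f)\cup I(g)$ lies outside $J(f\circ g)$ and hence outside $I(f\circ g)$. That route replaces your unresolved Julia-set case by the quoted equality of Julia sets; if you want to salvage your approach, the missing ingredient is exactly some statement of this strength relating the dynamics of $f$, $g$ and $f\circ g$ on the common Julia set, since permutability alone, used only through $(f\circ g)^n=f^n\circ g^n$, does not by itself yield the contradiction you need.
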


\begin{proof}
Suppose $w\notin I(f)\cup I(g).$ Then $w\notin I(f)$ and $w\notin I(g).$ As a consequence of this we obtain $w\notin J(f)\cup J(g)$  (we may assume that  $w$ is neither a preperiodic point of $f$ nor of $g$).  As $f$ and $g$ are of bounded type, therefore $J(f)=J(g)=J(f\circ g)$ using \cite[Theorem 2.2(ii)]{dinesh2}. Moreover, as $f\circ g$ is of bounded type so $I(f\circ g)\subset J(f\circ g)$ and hence  $w\notin I(f\circ g).$ This completes the proof of the result.
\end{proof}


The following result shows that postsingularly bounded entire functions are closed under composition:
\begin{theorem}\label{sec2,thm8}
Let $f$ and $g$ be permutable transcendental entire functions of bounded type. If $f$ and $g$ are postsingularly bounded, then so is $f\circ g.$
\end{theorem}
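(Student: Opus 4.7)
The plan is to reduce the statement directly to Lemma \ref{sec2,lem2}. Since $f$ and $g$ are postsingularly bounded, both $\mathcal{P}(f)$ and $\mathcal{P}(g)$ are bounded subsets of $\C$, so $\mathcal{P}(f)\cup\mathcal{P}(g)$ is bounded. If one can justify the inclusion
\[
\mathcal{P}(f\circ g)\subset \mathcal{P}(f)\cup\mathcal{P}(g),
\]
then $\mathcal{P}(f\circ g)$ is bounded and $f\circ g$ is postsingularly bounded by definition.

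Accordingly, the core of the plan is to invoke Lemma \ref{sec2,lem2} with its mutual invariance hypotheses $f(\text{Sing}(g^{-1}))\subset\text{Sing}(g^{-1})$ and $g(\text{Sing}(f^{-1}))\subset\text{Sing}(f^{-1})$ in force. Under those assumptions, Lemma \ref{sec2,lem2} gives exactly the inclusion above, and the theorem follows in one line. If those invariance conditions are to be verified rather than assumed, one exploits permutability: for a critical point $c$ of $f$ with $f(c)=v$, differentiating $f\circ g=g\circ f$ yields $f'(g(c))\,g'(c)=g'(f(c))\,f'(c)=0$, so $g(v)=f(g(c))$ is either a critical value of $f$ (when $g'(c)\neq 0$) or lies in $f(\text{Sing}(g^{-1}))$ (when $g'(c)=0$); a parallel argument along a curve tending to infinity handles asymptotic values; taking closures uses boundedness of $\text{Sing}(f^{-1})$ and $\text{Sing}(g^{-1})$.

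If the invariance conditions are not assumed at all, the backup plan is more direct. Start from the standard inclusion $\text{Sing}((f\circ g)^{-1})\subset \text{Sing}(f^{-1})\cup f(\text{Sing}(g^{-1}))$, which is bounded since $f,g\in\mathcal{B}$. Using permutability, $(f\circ g)^n=f^n\circ g^n=g^n\circ f^n$, hence
\[
(f\circ g)^n\bigl(\text{Sing}((f\circ g)^{-1})\bigr)\subset g^n\bigl(f^n(\text{Sing}(f^{-1}))\bigr)\cup f^{n+1}\bigl(g^n(\text{Sing}(g^{-1}))\bigr)\subset g^n(\mathcal{P}(f))\cup f^{n+1}(\mathcal{P}(g)).
\]

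The main obstacle is the last step of the backup plan: showing that the families $\{g^n(\mathcal{P}(f))\}_{n\ge 0}$ and $\{f^{n+1}(\mathcal{P}(g))\}_{n\ge 0}$ are uniformly bounded in $n$. Without the mutual invariance hypotheses of Lemma \ref{sec2,lem2}, this is not automatic; it appears to require either those invariance conditions or an auxiliary argument using that, for permutable bounded-type entire functions, each preserves the Fatou/Julia partition of the other so that compact subsets of one Fatou set have bounded orbits under the iterates of the other. I therefore expect the proof to proceed by the first route, implicitly inheriting the invariance hypotheses from Lemma \ref{sec2,lem2}, since that is the cleanest closure-under-composition statement the lemma was designed to produce.
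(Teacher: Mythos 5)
There is a genuine gap. Your main route rests on Lemma \ref{sec2,lem2}, but that lemma carries the extra hypotheses $f(\text{Sing}(g^{-1}))\subset\text{Sing}(g^{-1})$ and $g(\text{Sing}(f^{-1}))\subset\text{Sing}(f^{-1})$, which are \emph{not} among the hypotheses of Theorem \ref{sec2,thm8}; ``implicitly inheriting'' them means you prove a strictly weaker statement. Your sketch for deriving them from permutability does not close: for a critical point $c$ of $f$, differentiating $f\circ g=g\circ f$ gives $f'(g(c))\,g'(c)=g'(f(c))\,f'(c)=0$, and in the case $g'(c)=0$ you only conclude $g(v)=f(g(c))\in f(\text{Sing}(g^{-1}))$, which is not contained in $\text{Sing}(f^{-1})$ in general; asymptotic values and finite limit points are likewise not handled. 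So the mutual invariance conditions are genuinely additional assumptions, not consequences of permutability. Your backup plan is honest about its own obstacle: the uniform boundedness in $n$ of $g^n(\mathcal P(f))$ and $f^{n+1}(\mathcal P(g))$ is exactly the point that needs an argument, and you leave it unproved. In either form the proposal does not establish the theorem as stated.

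The paper takes a different route that avoids Lemma \ref{sec2,lem2} entirely: it argues by contradiction through escaping sets. Starting from $\text{Sing}((f\circ g)^{-1})\subset \text{Sing}(f^{-1})\cup f(\text{Sing}(g^{-1}))$, it supposes some singular value $\al$ of $f\circ g$ satisfies $(f\circ g)^n(\al)\to\ity$, i.e. $\al\in I(f\circ g)$, and then invokes Theorem \ref{sec2,thm6'} (namely $I(f\circ g)\subset I(f)\cup I(g)$, which is available precisely because $f$ and $g$ are permutable and of bounded type) to split into four cases according to whether $\al\in\text{Sing}(f^{-1})$ or $\al\in f(\text{Sing}(g^{-1}))$ and whether $\al\in I(f)$ or $\al\in I(g)$. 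Each case is excluded using $I(h)\subset J(h)$ for bounded type $h$, the equality $J(f)=J(g)=J(f\circ g)$ for permutable bounded-type functions, the postsingular boundedness of $f$ and $g$, and (in the last case) permutability to transfer the escaping orbit back to a point of $\text{Sing}(g^{-1})$. If you want to repair your write-up along the lines you proposed, the missing ingredient is exactly this escaping-set mechanism (or some substitute for the uniform boundedness step); simply citing Lemma \ref{sec2,lem2} with its invariance hypotheses does not suffice.
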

\begin{proof}
Denote  $A=\text{Sing}(f^{-1}), B=\text{Sing}(g^{-1})$ and $C=\text{Sing}(f\circ g)^{-1}.$ It suffices to show that for all $w\in C, (f\circ g)^{n}(w)$ is bounded as $n\to\ity.$ Suppose that  for some $\al\in C, (f\circ g)^{n}(\al)\to\ity$ as $n\to\ity,$ that is, $\al\in I(f\circ g).$ As $C\subset A\cup f(B)$ \cite{berg2}, therefore, either $\al\in A$ or $\al\in f(B).$ Using Theorem \ref{sec2,thm6'} $I(f\circ g)\subset I(f)\cup I(g)$, accordingly we have the following four cases:
\begin{enumerate}
\item[(i)]   $\al\in A$ and $\al\in I(f).$ In this case, we arrive at   a contradiction to postsingular set of $f$  being bounded.\\
\item[(ii)] $\al\in A$ and $\al\in I(g).$ As $g$ is of bounded type, so $I(g)\subset J(g),$ and as $f\circ g=g\circ f,$ therefore $J(f)=J(g)$ \cite{dinesh1}. We obtain $\al\subset A\cap J(f).$ Since $\al\in\mathcal P(f)$ and $f$ being postsingularly bounded implies $\mathcal P(f)\subset F(f),$ that is, $\al\in F(f)$ a contradiction.\\
\item[(iii)] $\al\in f(B)$ and $\al\in I(f).$ As $f$ is of bounded type, $I(f)\subset J(f)$ and as argued above $J(f)=J(g).$ Also as $B\subset\mathcal P(g)\subset F(g),$ implies $f(B)\subset f(F(g)).$ This further implies $f(B)\cap J(g)\subset f(F(g))\cap J(g)\subset F(g)\cap J(g)=\emptyset,$ a contradiction.\\
\item[(iv)] $\al\in f(B)$ and $\al\in I(g).$ As $\al\in f(B),$ so $\al=f(\be)$ for some $\be\in B.$ As $f(\be)\in I(g),$  so $g^n(f(\be))\to\ity$ as $n\to\ity.$ Since $f$ and $g$ are permutable,  $f(g^n(\be))\to\ity$ as $n\to\ity.$ As $f$ is an entire function, $g^n(\be)$ must tend to $\ity$ as $n\to\ity,$ that is, $\be\in I(g),$ a contradiction to postsingular set of $g$ being bounded. 
\end{enumerate}
Thus in all the four cases we arrive at a contradiction. Hence $f\circ g$ is postsingularly bounded.
\end{proof}

\begin{remark}\label{sec2,rem5}
It can be seen by  induction  that if $g_1,\ldots, g_n$ are permutable entire functions of bounded type which are postsingularly bounded, then so is $g_1\circ\cdots\circ g_n.$
\end{remark}

The next result provides a class of postsingularly bounded transcendental semigroups:

\begin{theorem}\label{sec2,thm9}
Let $G=[g_1,\ldots, g_n]$ be a finitely generated abelian transcendental semigroup in which each generator is of bounded type. If the generators of $G$ are postsingularly bounded  then so is $G.$
\end{theorem}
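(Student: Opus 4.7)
The plan is to mirror the proof of Theorem \ref{sec2,thm6}, but to replace the hyperbolicity-preservation step with the postsingular-boundedness-preservation lemma already established in Remark \ref{sec2,rem5}. Since $G$ is abelian and finitely generated by $g_1,\ldots,g_n$, every element $g\in G$ admits a normal form
\[
g = g_1^{l_1}\circ g_2^{l_2}\circ\cdots\circ g_n^{l_n},
\]
with nonnegative integer exponents $l_1,\ldots,l_n$, obtained by using the commutation relations to group like factors together. It therefore suffices to prove that each such composition is postsingularly bounded.

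To this end, I would verify that the family $\{g_i^{l_i} : 1\le i\le n\}$ satisfies the hypotheses of Remark \ref{sec2,rem5}. First, bounded type is preserved under composition (as recalled in the introduction), so each iterate $g_i^{l_i}$ is again of bounded type. Second, permutability of the iterates follows immediately from the abelian assumption on $G$. Third, the identity $\mathcal P(g_i^{l_i}) = \mathcal P(g_i)$ cited from \cite{baker1}, combined with the hypothesis that each generator is postsingularly bounded, shows that every $g_i^{l_i}$ is postsingularly bounded. Applying Remark \ref{sec2,rem5} to the family $\{g_i^{l_i}\}_{i=1}^n$ then yields that $g$ itself is postsingularly bounded.

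Since each $g\in G$ is postsingularly bounded, $G$ is postsingularly bounded by definition. I do not anticipate any substantive obstacle here; in fact, unlike Theorem \ref{sec2,thm6}, the present statement does not even require the singular-value-containment hypothesis $g_i(\text{Sing}(g_j)^{-1})\subset\text{Sing}(g_j)^{-1}$, because Theorem \ref{sec2,thm8} and its inductive extension are stated without that condition. The only point worth flagging explicitly is the existence of the normal form above, which is a routine consequence of commutativity of the generators.
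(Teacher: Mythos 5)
Your argument is correct, and it takes a shorter route than the paper's own proof. The paper uses the same normal form $g=g_1^{l_1}\circ\cdots\circ g_n^{l_n}$, but then essentially re-runs the case analysis from the proof of Theorem \ref{sec2,thm8} for this $n$-fold composition: it splits $\text{Sing}(g^{-1})$ into pieces of the form $g_1^{l_1}\circ\cdots\circ g_{i-1}^{l_{i-1}}(\text{Sing}(g_i^{l_i})^{-1})$, assumes some singular value of $g$ escapes, and uses permutability together with $I(g_i^{l_i})=I(g_i)$, $I(g_i)\subset J(g_i)$ and $J(g_i)=J(g_j)$ to contradict the boundedness of the $\mathcal P(g_i)$. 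You instead invoke Remark \ref{sec2,rem5} as a black box applied to the family $\{g_i^{l_i}\}$, supplying the one extra ingredient the paper only deploys in the proof of Theorem \ref{sec2,thm6}, namely $\mathcal P(g_i^{l_i})=\mathcal P(g_i)$ from \cite{baker1}; checking that the iterates are permutable, of bounded type, and postsingularly bounded is exactly as you say. Both proofs ultimately rest on Theorem \ref{sec2,thm8}, so neither is more general, but yours buys brevity and avoids duplicating the contradiction argument, mirroring the structure of the proof of Theorem \ref{sec2,thm6}, whereas the paper's direct argument does not need Baker's invariance of the postsingular set under iteration. Your side remarks are also accurate: the singular-value-containment hypothesis of Theorem \ref{sec2,thm6} is indeed not needed here, and the normal form (with generators of exponent zero simply omitted) is a routine consequence of commutativity.
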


\begin{proof}

$\mathcal P(g_1\circ\cdots\circ g_n)$ is bounded using Remark \ref{sec2,rem5}. From the permutability of each $g_i,\,1\leq i\leq n,$ any $g\in G$ can be represented as $g=g_1^{l_1}\circ\cdots\circ g_n^{l_n}.$ Denote  $A_1=\text{Sing}(g_1^{l_1})^{-1},\ldots, A_n=\text{Sing}(g_n^{l_n})^{-1}.$ It suffices to show that  $\mathcal P(g)$ is bounded. Suppose that for some $w\in \text{Sing}(g^{-1}), g^k(w)\to\ity$ as $k\to\ity,$ that is, $w\in I(g_1^{l_1}\circ\cdots\circ g_n^{l_n}).$ Then
\begin{equation}\label{sec2,eq3}
\begin{split}\notag
\text{Sing}(g)^{-1}
&=\text{Sing}(g_1^{l_1}\circ\cdots\circ g_n^{l_n})^{-1}\\
&\subset \text{Sing}(g_1^{l_1})^{-1}\cup g_1^{l_1}(\text{Sing}(g_2^{l_2})^{-1})\cup\cdots\cup g_1^{l_1}\circ\cdots\circ g_{n-1}^{l_{n-1}}(\text{Sing}(g_n^{l_n})^{-1})\\
&=A_1\cup g_1^{l_1}(A_2)\cup\cdots\cup g_1^{l_1}\circ\cdots\circ g_{n-1}^{l_{n-1}}(A_n).
\end{split}
\end{equation}
As $w\in \text{Sing}(g^{-1}),$ so let $w\in g_1^{l_1}\circ\cdots\circ g_{n-1}^{l_{n-1}}(A_n).$ This implies $w=g_1^{l_1}\circ\cdots\circ g_{n-1}^{l_{n-1}}(z)$ for some $z\in A_n.$ As $w\in I(g)=g_1^{l_1}\circ\cdots\circ g_n^{l_n}$ which further is contained in $\cup_{i=1}^{n}I(g_i^{l_i}).$ If $w\in I(g_n^{l_n})=I(g_n),$ then $g_n^k(w)\to\ity$ as $k\to\ity,$ that is $g_n^k(g_1^{l_1}\circ\cdots\circ g_{n-1}^{l_{n-1}}(z))\to\ity$ as $k\to\ity.$ Using the permutability of the generators we obtain, $g_1^{l_1}\circ\cdots\circ g_{n-1}^{l_{n-1}}(g_n^k(z))\to\ity$ as $k\to\ity.$ As $g_1^{l_1}\circ\cdots\circ g_{n-1}^{l_{n-1}}$ is an entire function, $g_n^k(z)$ must tend to $\ity$ as $k\to\ity,$ that is, $z\in I(g_n)\cap A_n$, which contradicts the hypothesis that $\mathcal P(g_n)$ is bounded. 
Now let $w\in I(g_i^{l_i})=I(g_i)$ for some $1\leq i<n.$ This implies $g_i^k(w)\to\ity$ as $k\to\ity,$ that is, $g_i^k(g_1^{l_1}\circ\cdots\circ g_{n-1}^{l_{n-1}}(z))\to\ity$ as $k\to\ity,$ which further implies that $g_1^{l_1}\circ\cdots\circ g_{n-1}^{l_{n-1}}(g_i^{k+l_i}(z))\to\ity$ as $k\to\ity.$ 
As $g_1^{l_1}\circ\cdots\circ g_{n-1}^{l_{-1}}$ is an entire function, $z\in I(g_i),$ that is, $z\in I(g_i)\cap A_n$ which further is contained in $J(g_i)\cap F(g_n).$ From \cite{dinesh1}[Theorem], $J(g_i)=J(g_j)$ for all $1\leq i, j\leq n,$ we arrive at a contradiction and hence $g$ is postsingularly bounded which in turn implies that $G$ is postsingularly bounded.
\end{proof}

%
%

Recall that a  component $U$ of $F(G)$ is called a \emph{wandering domain} of $G$ if the set $\{U_g:g\in G\}$ is infinite (where  $U_g$ is the component of $F(G)$ containing $g(U)$).
%
%
%
%
%
%

A criterion for non existence of wandering domains for an entire function of bounded type was given in \cite{berg3}.

\begin{lemma}\cite{berg3}\label{sec2,lem5'}
Let  $f$ be an entire function of bounded type for which $E^\prime(f)\cap J(f)$ is finite and consists only of rationally indifferent or repelling periodic points and preimages of such points. Then $f$ does not have wandering domains.
\end{lemma}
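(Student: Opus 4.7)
The plan is to argue by contradiction: assume $U$ is a wandering domain of $f$, let $U_n$ denote the Fatou component of $F(f)$ containing $f^n(U)$, and derive a contradiction from the structural hypothesis on $E^\prime(f)\cap J(f)$. The argument splits into two steps: first pin down every limit function of $(f^{n_k})|_U$ to a single constant in $E^\prime(f)\cap J(f)$, and then use the specific type of periodic point that constant must be to force $U$ not to wander.

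For the first step, since $f\in\mathcal B$ the Eremenko-Lyubich theorem gives $I(f)\subset J(f)$, so $\ity$ is not a limit function of $(f^n)$ on any Fatou component; hence every $\phi\in\mathfrak L(U)$ is finite. A standard argument for wandering domains shows $\phi$ is constant: a non-constant $\phi$ would propagate under $f$ to force eventual periodicity of the $U_n$, contradicting wandering. Writing $\phi\equiv c$, I would invoke the Bergweiler-Haruta-Kriete-Meier-Terglane theorem on limit functions in wandering domains: the constant $c$ must lie in $\mathcal P^\prime(f)\cap J(f)$. For $f\in\mathcal B$ one has $\mathcal P^\prime(f)=E^\prime(f)$ (singular values accumulate only through iterates of singular values), giving $c\in E^\prime(f)\cap J(f)$.

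By hypothesis $c$ is either on a repelling or rationally indifferent periodic cycle, or is a preimage of such a point. Applying $f$ along the shifted subsequence $(n_k+j)$ shows that $f^j(c)$ is also a limit constant; after finitely many steps the orbit lands on the periodic cycle, so I may assume $c$ itself is periodic of period $p$ with $(f^p)^\prime(c)=\la$ either satisfying $|\la|>1$ or equal to a root of unity. In the repelling case, a local linearization at $c$ gives a disk $V\ni c$ and a contracting holomorphic inverse branch $\psi$ of $f^p$ fixing $c$; since $f^{n_k}|_U\to c$ locally uniformly, for large $k$ we have $f^{n_k}(U)\subset V$, and then composing with iterates of $\psi$ transports the normality of $(f^{n_k})$ on $U$ to a neighborhood of $c$, placing $c\in F(f)$ and contradicting $c\in J(f)$. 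In the rationally indifferent case, the Leau-Fatou flower theorem supplies attracting petals at $c$ lying in $F(f)$; any orbit converging to $c$ must eventually enter a petal, which belongs to a periodic parabolic basin, so $U_{n_k}$ would coincide with such a petal for large $k$, contradicting wandering.

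The principal obstacle is the first step: establishing that wandering-domain limit constants lie in the small set $E^\prime(f)\cap J(f)$ rather than merely in $\overline{\mathcal P(f)}$. This is a bounded-type refinement of the Bergweiler-Haruta-Kriete-Meier-Terglane theorem and rests crucially on $f\in\mathcal B$, which controls the accumulation of the postsingular set and prevents singular values from escaping. Once this localization is secured, the remaining local-dynamics arguments ruling out accumulation at repelling or rationally indifferent cycles are routine.
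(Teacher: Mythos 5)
The paper itself offers no proof of this lemma; it is quoted from \cite{berg3} (Bergweiler--Haruta--Kriete--Meier--Terglane), so your attempt has to be measured against that argument. Your skeleton (finite limit functions on a wandering domain are constant, lie in $E^\prime(f)$, land after finitely many steps on a repelling or parabolic cycle, then derive a contradiction from local dynamics) is the right one, but two steps do not hold up. First, the deduction ``$I(f)\subset J(f)$, hence $\infty$ is not a limit function on any Fatou component'' is unjustified: $I(f)\subset J(f)$ only forbids the \emph{full} sequence $f^n$ from tending to $\infty$ on a Fatou component; a subsequence may still do so (class $\mathcal B$ admits oscillating wandering domains, e.g.\ Bishop's example, on which $\infty$ is a subsequential limit). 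What bounded type actually buys is that the limit functions cannot \emph{all} be $\infty$, which is the correct and sufficient statement. Incidentally, the ``bounded-type refinement'' you flag as the principal obstacle is not needed: the main theorem of \cite{berg3} already places every finite limit function of iterates on a wandering domain in the derived set $E^\prime(f)$, with no hypothesis on $\mathrm{Sing}(f^{-1})$; membership in $J(f)$ follows because a constant limit in a Fatou component would force the orbit of $U$ to revisit one component, making $U$ preperiodic.

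The serious gap is the local-dynamics step, where your proposed contradictions cannot be reached. In the repelling case you aim to show $c\in F(f)$, but a repelling periodic point always lies in $J(f)$, and ``transporting normality of $(f^{n_k})$ on $U$ to a neighborhood of $c$'' via inverse branches is not a valid operation: normality at $c$ concerns the whole family $\{f^m\}$ on a fixed neighborhood of $c$, which is in no way controlled by the behaviour of $f^{n_k}$ on $U$ (also, locally uniform convergence gives $f^{n_k}(K)\subset V$ only for compact $K\subset U$, not $f^{n_k}(U)\subset V$). In the parabolic case you tacitly replace subsequential convergence by convergence of the orbit: the points $f^{n_k}(z_0)$ close to $c$ need not lie in attracting petals --- they may sit in repelling petals and be ejected before the orbit returns at time $n_{k+1}$ --- so one cannot conclude that $U_{n_k}$ is a parabolic basin component. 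The missing idea, which is exactly how \cite{berg3} concludes and where the finiteness of $E^\prime(f)\cap J(f)$ is used a second time, is an ``exit point'' argument: choose $r$ so small that $\overline{D(c,r)}$ meets the finite set $E^\prime(f)\cap J(f)$ only in $c$; the orbit of $z_0\in U$ enters every $D(c,\rho)$ infinitely often, yet must also leave $D(c,r)$ infinitely often (otherwise it would converge to $c$ through an attracting petal or by linearization, forcing $U$ to be preperiodic); since one application of $f^p$ moves points of $\overline{D(c,r)}$ by at most a fixed Lipschitz factor, the exit points lie in a compact annulus $\rho\le|z-c|\le M\rho$ and accumulate at some finite $w\neq c$ there; by normality and the wandering of $U$, $w$ is again a constant limit function, hence must lie in $E^\prime(f)\cap J(f)$, contradicting the choice of $r$. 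Without an argument of this type the oscillating case --- the only hard case --- is untouched, so the proposal as written does not prove the lemma.
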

 We shall see that this result  gets generalized to semigroups. 
We first establish a lemma.
\begin{lemma}\label{sec2,lem5''}
Let $f$ and $g$ be permutable entire functions of bounded type. If both $\mathcal P(f)\cap J(f)$ and $\mathcal P(g)\cap J(g)$ are finite and consists only of rationally indifferent or repelling periodic points and preimages of such points (that is, if $f$ and $g$ have no wandering domains), then similar behavior also holds for $\mathcal P(f\circ g)\cap J(f\circ g).$
\end{lemma}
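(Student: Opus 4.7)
The strategy is to reduce everything to the hypotheses on $f$ and $g$ separately by using the triple equality $J(f)=J(g)=J(f\circ g)$ (and consequently $F(f)=F(g)=F(f\circ g)$), which follows from permutability and bounded type via Theorem 2.2(ii) of \cite{dinesh2}, together with the standard inclusion $\text{Sing}((f\circ g)^{-1})\subset\text{Sing}(f^{-1})\cup f(\text{Sing}(g^{-1}))$ from \cite{berg2} and the permutability identity $(f\circ g)^n=f^n\circ g^n$. These tools should let me transfer both the finiteness and the multiplier classification from $\mathcal P(f)\cap J(f)$ and $\mathcal P(g)\cap J(g)$ to $\mathcal P(f\circ g)\cap J(f\circ g)$.

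First I would establish finiteness of $\mathcal P(f\circ g)\cap J(f\circ g)$. Since $\text{Sing}(f^{-1})\subset\mathcal P(f)$ and $\mathcal P(f)\cap J(f)$ is finite by hypothesis, the set $\text{Sing}(f^{-1})\cap J(f)$ is finite, and similarly for $g$. A singular value of $f\circ g$ lying in $F(f\circ g)=F(f)$ has its whole $(f\circ g)$-orbit in $F(f\circ g)$, so only singular values of $f\circ g$ in $J(f\circ g)$ can contribute to $\mathcal P(f\circ g)\cap J(f\circ g)$; these form the finite set $(\text{Sing}(f^{-1})\cap J(f))\cup f(\text{Sing}(g^{-1})\cap J(g))$. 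Writing the $(f\circ g)$-orbit of a seed $s$ as $f^n(g^n(s))$ and doing a case split on whether $s$ arises from $\text{Sing}(f^{-1})$ or from $f(\text{Sing}(g^{-1}))$, I would use the identity $f^k\circ g=g\circ f^k$ repeatedly to show that this orbit stays inside a finite subset of $(\mathcal P(f)\cap J(f))\cup(\mathcal P(g)\cap J(g))$: the seed is preperiodic under $f$ or $g$ by the hypothesis, and the commutation forces its $f$- and $g$-images to permute within this finite set. Taking the closure then preserves finiteness.

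Next, classify the periodic cycles. Every point of $\mathcal P(f\circ g)\cap J(f\circ g)$, being in a finite forward-invariant set, is preperiodic under $f\circ g$. If $(f\circ g)^k(w)=w$, the preceding analysis places $w$ on a periodic cycle of $f$ or of $g$ in $\mathcal P(f)\cap J(f)$ or $\mathcal P(g)\cap J(g)$; say $f^p(w)=w$ with $(f^p)'(w)$ either a root of unity or of modulus $>1$. Because the $g$-orbit of $w$ also lies in a finite set, $g^q(w)=w$ for some $q$, and then by the chain rule
\[
((f\circ g)^{pq})'(w)=(f^{pq})'(w)\,(g^{pq})'(w),
\]
a product of multipliers each of modulus $>1$ or a root of unity. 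A short case-check shows this product is again either a root of unity (rationally indifferent) or of modulus $>1$ (repelling), ruling out Cremer behavior. Preimages under $f\circ g$ of these periodic points are, by definition, the preimages required by the conclusion.

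The main obstacle will be the orbit-confinement in the second paragraph: proving that $f^n(g^n(s))$ remains in a finite set even though $g$ need not \emph{a priori} preserve $\text{Sing}(f^{-1})$ (so Lemma \ref{sec2,lem2} does not apply directly). The key technical input is that the forward $f$-orbit of each relevant seed is eventually periodic inside the finite set $\mathcal P(f)\cap J(f)$, while $g$ commutes with every iterate of $f$ and must therefore permute the finitely many $f$-periodic cycles lying in $J(f)=J(g)$, and symmetrically for $f$ acting on $g$-cycles in the shared Julia set; combining these two permutation constraints is what confines the mixed orbit to a finite union of periodic cycles and their preimages.
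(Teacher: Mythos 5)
Your route is genuinely different from the paper's, and unfortunately it has real gaps where it departs. The paper disposes of the lemma in two lines: it invokes the containment $\mathcal P(f\circ g)\subset\mathcal P(f)\cup\mathcal P(g)$ (attributed to Theorem \ref{sec2,thm8}) together with $J(f)=J(g)=J(f\circ g)$ from \cite{dinesh2} to get $\mathcal P(f\circ g)\cap J(f\circ g)\subset(\mathcal P(f)\cap J(f))\cup(\mathcal P(g)\cap J(g))$, and reads off the conclusion. You instead work at the level of $\text{Sing}((f\circ g)^{-1})\subset\text{Sing}(f^{-1})\cup f(\text{Sing}(g^{-1}))$ and try to confine the mixed orbits $f^n(g^n(s))$ by hand. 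The first gap appears immediately: the claim that singular values of $f\circ g$ lying in $F(f\circ g)$ ``cannot contribute'' to $\mathcal P(f\circ g)\cap J(f\circ g)$ ignores that $\mathcal P$ is a closure. An orbit inside the Fatou set can accumulate on the Julia set (parabolic limit points do exactly this, and limit points of orbits in a putative wandering domain of $f\circ g$ cannot be excluded at this stage without circularity, since ruling out wandering domains of $f\circ g$ is the purpose of the lemma). The criterion of Lemma \ref{sec2,lem5'} from \cite{berg3} concerns precisely the derived set $E'(f\circ g)\cap J(f\circ g)$, so discarding the limit points discards the heart of the matter.

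The second gap is the orbit confinement you yourself flag as the main obstacle: the sketched mechanism does not close it. Commutation does show that $g$ maps $f$-periodic points to $f$-periodic points, but nothing forces $g$ to map the finitely many $f$-cycles meeting $\mathcal P(f)\cap J(f)$ back into that finite set; there are infinitely many $f$-cycles in $J(f)=J(g)$, and the hypotheses say nothing about $g$-orbits of points of $\mathcal P(f)$ that do not lie in $\mathcal P(g)$. Hence finiteness of $\{f^n(g^n(s)):n\in\N\}$ is not established. Third, in the multiplier step, membership of $w$ in a finite forward-invariant set gives only preperiodicity under $g$, not $g^q(w)=w$; and even granting periodicity, the factor $(g^{pq})'(w)$ is not controlled by the hypothesis, because $w$ need not belong to $\mathcal P(g)$ --- a priori it could be a Cremer periodic point of $g$ in $J(g)$, and excluding that requires the (uncited) classical fact that such points lie in the closure of the singular orbits. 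To your credit, the issue you are trying to repair --- the hypothesis classifies periodic points of $f$ and of $g$, while the conclusion needs rationally indifferent or repelling periodic points of $f\circ g$ and their $(f\circ g)$-preimages --- is passed over silently in the paper's own proof; but your proposed repair does not yet close it, so as it stands the proposal is not a proof.
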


\begin{proof}
Using Theorem \ref{sec2,thm8}, $\mathcal P(f\circ g)\subset \mathcal P(f)\cup\mathcal P(g).$ Also from \cite[Theorem 2.2]{dinesh2}, $J(f)=J(g)=J(f\circ g).$ As a result, we obtain
\begin{equation}\label{sec2,eq2'}
 \begin{split}\notag
\mathcal P(f\circ g)\cap J(f\circ g)
&\subset(\mathcal P(f)\cup\mathcal P(g))\cap J(f)\\
&\subset(\mathcal P(f)\cap J(f))\cup(\mathcal P(g)\cap J(g)).\\ 
\end{split}
\end{equation}
It now follows that $\mathcal P(f\circ g)\cap J(f\circ g)$ is finite and consists only of rationally indifferent or repelling periodic points and preimages of such points, and hence $f\circ g$ has no wandering domains.
\end{proof}
 The next result rules out the existence of wandering domains for a certain class of transcendental semigroups. To prove the result, we require the following lemmas.

\begin{lemma}\cite[Lemma 4.6]{dinesh6}\label{sec2,lem3}
For a transcendental semigroup $G=[g_1,g_2,\ldots], \mathcal P(G)= \overline{\B(\bigcup_{g\in G}\mathcal P(g)\B)}.$
\end{lemma}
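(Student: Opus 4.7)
The plan is to prove the two inclusions $\overline{\bigcup_{g \in G}\mathcal P(g)} \subset \mathcal P(G)$ and $\mathcal P(G) \subset \overline{\bigcup_{g \in G}\mathcal P(g)}$ separately. The reverse inclusion is immediate: for each $f \in G$, taking $n = 0$ in the definition of $\mathcal P(f)$ gives $\text{Sing}(f^{-1}) \subset \mathcal P(f)$, hence $\bigcup_{f \in G}\text{Sing}(f^{-1}) \subset \bigcup_{g \in G}\mathcal P(g)$, and closing both sides yields $\mathcal P(G) \subset \overline{\bigcup_{g \in G}\mathcal P(g)}$.

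For the forward inclusion, the key technical step is to verify that for every $g \in G$ and every $n \geq 1$,
\[
g^n(\text{Sing}(g^{-1})) \subset \text{Sing}((g^{n+1})^{-1}),
\]
because $g^{n+1} \in G$ will then place $g^n(\text{Sing}(g^{-1}))$ inside $\bigcup_{f \in G}\text{Sing}(f^{-1})$. I would check this by the three types of elements of $\text{Sing}(g^{-1})$: (i) if $w = g(v)$ with $g'(v) = 0$, then the chain rule makes $v$ a critical point of $g^{n+1}$, so $g^n(w) = g^{n+1}(v)$ is a critical value of $g^{n+1}$; (ii) if $w$ is an asymptotic value of $g$ along a curve $\Gamma \to \infty$, then $g^{n+1}(z) = g^n(g(z)) \to g^n(w)$ as $z \to \infty$ along $\Gamma$, so $g^n(w)$ is an asymptotic value of $g^{n+1}$; (iii) if $w$ is a finite limit point of critical or asymptotic values $w_k$ of $g$, then continuity gives $g^n(w_k) \to g^n(w)$ with each $g^n(w_k) \in \text{Sing}((g^{n+1})^{-1})$ by (i) or (ii), and since $\text{Sing}((g^{n+1})^{-1})$ already contains its own finite limit points by definition, $g^n(w)$ lies in it.

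Combined with the trivial case $n = 0$, where $g^0(w) = w \in \text{Sing}(g^{-1})$, we obtain $\bigcup_{n \geq 0} g^n(\text{Sing}(g^{-1})) \subset \bigcup_{f \in G}\text{Sing}(f^{-1})$, and closing both sides yields $\mathcal P(g) \subset \mathcal P(G)$. Taking the union over $g \in G$ and then closing — which is redundant since $\mathcal P(G)$ is already closed — completes the forward inclusion.

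The main obstacle is case (iii): the argument depends crucially on the convention that $\text{Sing}(\cdot^{-1})$ is defined to include finite limit points of critical and asymptotic values, so that $\text{Sing}((g^{n+1})^{-1})$ is itself closed in $\mathbb C$ and absorbs the limit $g^n(w)$ without needing an extra closure operation. Had the definition excluded finite limit points, the inclusion $g^n(\text{Sing}(g^{-1})) \subset \text{Sing}((g^{n+1})^{-1})$ would fail and one would instead need to pass to closures at each step, still arriving at the same result but via a more delicate bookkeeping.
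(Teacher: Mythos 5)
Your proof is correct. Note that the paper itself gives no argument for this lemma---it is quoted from \cite[Lemma 4.6]{dinesh6}---so there is nothing internal to compare against; your self-contained argument is the natural one: the inclusion $\mathcal P(G)\subset\overline{\bigcup_{g\in G}\mathcal P(g)}$ from the $n=0$ term, and the key inclusion $g^{n}(\text{Sing}(g^{-1}))\subset \text{Sing}\big((g^{n+1})^{-1}\big)$ (critical values via the chain rule, asymptotic values via continuity along the curve, and limit points absorbed because the paper's convention makes $\text{Sing}(\cdot^{-1})$ contain its finite limit points), combined with $g^{n+1}\in G$ and the closedness of $\mathcal P(G)$, which gives $\mathcal P(g)\subset\mathcal P(G)$ for every $g\in G$ and hence the forward inclusion.
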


\begin{lemma}\cite[Theorem 4.2]{poon1}\label{sec2,lem4}
 For a transcendental semigroup $G=[g_1,g_2,\ldots],\,J(G)=\overline{\B(\bigcup_{g\in G}J(g)\B)}.$
\end{lemma}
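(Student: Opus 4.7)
The plan is to prove the equality $J(G)=\overline{\bigcup_{g\in G}J(g)}$ by establishing the two set inclusions separately. The easy direction $\overline{\bigcup_{g\in G}J(g)}\subseteq J(G)$ follows from the observation that for every fixed $g\in G$, the family of iterates $\{g^n\}_{n\geq 1}$ is a subfamily of $G$ (by closure of $G$ under composition); hence normality of the full family $G$ on an open set $U$ forces normality of $\{g^n\}_{n\geq 1}$ on $U$. This gives $F(G)\subseteq F(g)$ and therefore $J(g)\subseteq J(G)$ for every $g\in G$. Taking the union over $g$ and using that $J(G)$ is closed completes this inclusion.

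For the harder direction $J(G)\subseteq\overline{\bigcup_{g\in G}J(g)}$, my plan is to exploit a minimality characterization of the semigroup Julia set: $J(G)$ should be contained in any closed set $E\subseteq\C$ which is backward invariant under every $h\in G$ and contains sufficiently many points (in analogy with the Montel-based fact that $J(f)$ is the smallest such set in the classical theory for a single function). Applying this with $E=\overline{\bigcup_{g\in G}J(g)}$, closedness is immediate, and abundance of points comes from the fact that each $J(g)$ is perfect and infinite for a transcendental entire $g$. The argument then reduces to verifying that $E$ is backward invariant under $G$.

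Backward invariance is the main obstacle. Concretely, for $w\in\C$ with $h(w)\in E$, one needs points of $\bigcup_{g\in G}J(g)$ accumulating at $w$. A clean route, if available, is an inclusion of the form $h^{-1}(J(f))\subseteq J(f\circ h)$: since $f\circ h\in G$, this places such preimages inside $\bigcup_{g\in G}J(g)\subseteq E$. The inclusion is trivial when $f$ and $h$ commute (via $J(f\circ h)=J(f)$, for instance in the setting already exploited in Theorem \ref{sec2,thm6'}), but in general I would argue by contradiction with a Montel-type step: if $h^{-1}(J(f))$ contained a point with a neighborhood on which $\{(f\circ h)^n\}$ were normal, then passing through $h$ and comparing with $\{f^n\}$ on a neighborhood of $h(w)$ would contradict membership of $h(w)$ in $J(f)$. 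Once backward invariance is secured, the minimality step finishes the proof.
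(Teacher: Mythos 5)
The paper gives no proof of this lemma at all --- it is quoted from Poon \cite{poon1} --- so what matters is whether your argument stands on its own, and it does not. Your easy inclusion ($F(G)\subseteq F(g)$, hence $J(g)\subseteq J(G)$) and your minimality step are fine: if $E$ is closed, contains at least three points and satisfies $h^{-1}(E)\subseteq E$ for every $h\in G$, then its complement is forward invariant under every element of $G$ and omits at least two finite values, so Montel gives $J(G)\subseteq E$. The genuine gap is the backward invariance of $E=\overline{\bigcup_{g\in G}J(g)}$, which you base on the inclusion $h^{-1}(J(f))\subseteq J(f\circ h)$. That inclusion is false in general. The correct composition relation (Bergweiler--Wang \cite{berg2}) is $J(f\circ h)=h^{-1}\bigl(J(h\circ f)\bigr)$: the Julia set that pulls back under $h$ is that of $h\circ f$, not that of $f$. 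Your Montel-type justification makes precisely this slip: from $h\circ (f\circ h)^n=(h\circ f)^n\circ h$ one sees that normality of $\{(f\circ h)^n\}$ near $w$ is tied to normality of $\{(h\circ f)^n\}$ near $h(w)$, and the hypothesis $h(w)\in J(f)$ says nothing about the latter, so no contradiction arises. That the inclusion genuinely fails (and is not merely unproved) is already visible for the simplest non-commuting pair: for $f(z)=z^2$ and $h(z)=z^2/4$ one has $h^{-1}(J(f))=\{|z|=2\}$ while $J(f\circ h)=\{|z|=2^{4/3}\}$; transcendence plays no role in the implication you need. Note also that backward invariance of $E$ is essentially equivalent to the lemma itself (once the lemma is known, $E=J(G)$ is backward invariant), so this is not a technical detail but the entire content; your fallback via commutativity and $J(f)=J(h)=J(f\circ h)$ covers only permutable bounded-type elements, far short of the arbitrary, possibly non-abelian and infinitely generated, semigroups in the statement.

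For comparison, the proofs in the literature obtain the hard inclusion $J(G)\subseteq\overline{\bigcup_{g\in G}J(g)}$ by a different mechanism: Hinkkanen and Martin \cite{martin} in the rational case, and Poon \cite{poon1} in the transcendental case, show by a Montel/fixed-point type argument that every neighbourhood of a point of $J(G)$ already meets $J(g)$ for some single element $g\in G$ (in the rational setting this comes from the density in $J(G)$ of repelling fixed points of elements of the semigroup), rather than attempting to show that the closed union of the Julia sets is backward invariant. To rescue your plan you would have to prove that backward invariance directly, and there is no soft argument for it.
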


\begin{theorem}\label{sec2,thm7}
Let $G=[g_1,\ldots, g_n]$ be a finitely generated abelian transcendental semigroup in which each generator is of bounded type. If for each $i,\, 1\leq i\leq n,$ $\mathcal P(g_i)\cap J(g_i)=\emptyset,$ then $G$ has no wandering domains.
\end{theorem}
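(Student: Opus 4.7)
The plan is first to verify that each generator has no wandering domains in the classical sense, and then to use the abelian structure of $G$, together with the equality $F(g_i)=F(G)$, to lift this to the absence of wandering domains for $G$ itself.

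First, I would unpack the hypothesis $\mathcal P(g_i)\cap J(g_i)=\emptyset$. Since $E'(g_i)\subset\mathcal P(g_i)$, this forces $E'(g_i)\cap J(g_i)=\emptyset$. The empty set is trivially finite and consists vacuously of rationally indifferent or repelling periodic points and preimages of such points, so Lemma \ref{sec2,lem5'} applies and each generator $g_i$ has no wandering domains. (One could additionally propagate this to every $g\in G$ via Lemma \ref{sec2,lem5''}, since $\mathcal P(g)\cap J(g)$ remains empty under composition, but this is not strictly required for what follows.)

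Next, I would invoke \cite[Theorem 5.9]{dinesh1}, already used in the proof of Theorem \ref{sec2,thm6}, which asserts $F(g_i)=F(G)$ for every $1\leq i\leq n$ in the present abelian setting. Consequently the connected components of $F(G)$ coincide with those of each $F(g_i)$, so any component $U$ of $F(G)$ is simultaneously a Fatou component of every generator.

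Fix such a $U$. Since $g_i$ has no wandering domains and $U$ is a component of $F(g_i)=F(G)$, the forward orbit $\{g_i^{k}(U):k\geq 0\}$ visits only finitely many components of $F(G)$. By abelianness every $g\in G$ admits a normal form $g=g_1^{l_1}\circ\cdots\circ g_n^{l_n}$, and a short induction on the number of generators --- peeling off one factor at a time and invoking the finiteness of each single-generator orbit on each of the finitely many components produced so far --- shows that $\{g_1^{l_1}\circ\cdots\circ g_n^{l_n}(U):l_i\in\N\}$ is a finite set of components. Therefore $\{U_g:g\in G\}$ is finite, so $U$ is not a wandering domain of $G$. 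The main obstacle will be the equality $F(g_i)=F(G)$: without it a single $F(g_i)$-component could encompass many distinct $F(G)$-components, and the finite-orbit conclusion for $g_i$ on $F(g_i)$ would not translate into a finite-orbit conclusion on $F(G)$. This equality is a non-trivial input from the theory of abelian transcendental semigroups, imported from \cite{dinesh1}.
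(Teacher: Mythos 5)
Your proof is correct, but it follows a genuinely different route from the paper's. The paper first uses Lemma \ref{sec2,lem5'} and Lemma \ref{sec2,lem5''} to rule out wandering domains for every element $g\in G$, then reduces the semigroup statement to showing $\mathcal P(G)\cap J(G)=\emptyset$ via the closure formulas of Lemmas \ref{sec2,lem3} and \ref{sec2,lem4}, and finally invokes \cite[Theorem 5.23]{dinesh1} to conclude that $G$ has no wandering domains. You instead work directly at the level of Fatou components: after disposing of the generators by Lemma \ref{sec2,lem5'} (your vacuous-hypothesis reading is fine, since $E'(g_i)\subset\mathcal P(g_i)$), you use $F(g_i)=F(G)$ to identify the components and then run a finite-orbit induction over the normal form $g=g_1^{l_1}\circ\cdots\circ g_n^{l_n}$, which indeed shows $\{U_g:g\in G\}$ is finite; you do not even need Lemma \ref{sec2,lem5''} or the postsingular set of $G$ at all. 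One caveat on your key input: as cited in this paper, \cite[Theorem 5.9]{dinesh1} gives equality of the Fatou (equivalently Julia) sets of the \emph{elements} of $G$, not of $G$ itself; to get $F(G)=F(g_i)$ you should combine that equality with Lemma \ref{sec2,lem4}, which yields $J(G)=\overline{\bigcup_{g\in G}J(g)}=J(g_i)$ since all the $J(g)$ coincide and are closed (all elements of $G$ are permutable and of bounded type, compositions of bounded-type maps being bounded type). With that small repair your argument is self-contained and arguably cleaner: it avoids both the paper's interchange of closures with intersections in computing $\mathcal P(G)\cap J(G)$ and the external input \cite[Theorem 5.23]{dinesh1}, whereas the paper's route has the side benefit of establishing the hyperbolicity-type conclusion $\mathcal P(G)\cap J(G)=\emptyset$ for the semigroup along the way.
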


\begin{proof}
Using Lemma \ref{sec2,lem5'}, for each $i,\, 1\leq i\leq n,$ $g_i$ has no wandering domains. As a consequence of Lemma \ref{sec2,lem5''}, each $g\in G$ does not contain wandering domains. In order to show that $G$ has no wandering domains, it suffices to show that $\mathcal P(G)\cap J(G)=\emptyset.$ Using Lemmas \ref{sec2,lem3} and \ref{sec2,lem4}, 
\begin{equation}\label{sec2,eq2}
 \begin{split}\notag
\mathcal P(G)\cap J(G)
&=\overline{\B(\bigcup_{g\in G}\mathcal P(g)\B)}\bigcap\overline{\B(\bigcup_{g\in G}J(g)\B)}\\
&=\overline{\B(\bigcup_{g\in G}(\mathcal P(g)\cap J(g))\B)}
\end{split}
\end{equation}
and hence we deduce that $\mathcal P(G)\cap J(G)=\emptyset.$ As a consequence of \cite[Theorem 5.23]{dinesh1}, $G$ has no wandering domains.
\end{proof}
The next result provides another criterion for the absence of wandering domains for a certain class of transcendental semigroups.

\begin{theorem}\label{sec2,thm7'}
Let $G=[g_1,\ldots, g_n]$ be a finitely generated abelian transcendental semigroup in which each generator is of bounded type. If $\mathcal P(G)\cap J(G)$ is finite and consists only of rationally indifferent or repelling periodic points and preimages of such points then $G$ has no wandering domains.
\end{theorem}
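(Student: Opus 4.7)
The plan is to mirror the structure of the proof of Theorem \ref{sec2,thm7}, reducing to the single-function criterion Lemma \ref{sec2,lem5'} for each generator, promoting to every element of $G$ via Lemma \ref{sec2,lem5''}, and finally lifting from individual functions to the semigroup using the criterion \cite[Theorem 5.23]{dinesh1} employed at the end of Theorem \ref{sec2,thm7}.

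First, I would transfer the hypothesis from $G$ to each generator. By Lemma \ref{sec2,lem3}, $\mathcal P(g_i)\subset\mathcal P(G)$ and, by Lemma \ref{sec2,lem4}, $J(g_i)\subset J(G)$ for every $1\le i\le n$, so
\[
\mathcal P(g_i)\cap J(g_i)\subset \mathcal P(G)\cap J(G).
\]
The right-hand side is finite and consists only of rationally indifferent or repelling periodic points and preimages of such points; in particular these properties are inherited by the subset $\mathcal P(g_i)\cap J(g_i)$ regarded with respect to $g_i$. Lemma \ref{sec2,lem5'} now gives that no generator $g_i$ possesses a wandering domain.

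Next, since $G$ is abelian and finitely generated, every $g\in G$ can be written in the form $g=g_1^{l_1}\circ\cdots\circ g_n^{l_n}$. Iterating Lemma \ref{sec2,lem5''} in the manner of Remark \ref{sec2,rem5}, I would conclude that $\mathcal P(g)\cap J(g)$ is also finite and consists only of rationally indifferent or repelling periodic points and their preimages, so that $g$ has no wandering domains.

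Finally, to lift the absence of wandering domains from each $g\in G$ to $G$ itself, I would apply \cite[Theorem 5.23]{dinesh1}, exactly as in the concluding step of Theorem \ref{sec2,thm7}, using the hypothesis on $\mathcal P(G)\cap J(G)$ directly. The main obstacle lies precisely here: the cited theorem was invoked in Theorem \ref{sec2,thm7} under the stronger assumption $\mathcal P(G)\cap J(G)=\emptyset$, whereas we only have the finite exceptional set of the prescribed type. The needed strengthening is the natural semigroup analog of Lemma \ref{sec2,lem5'}, and it should be obtainable either by a direct inspection of the semigroup Sullivan-type argument of \cite{dinesh1}, or, failing that, by a hands-on contradiction: a wandering component $U$ of $F(G)$ would force its $G$-orbit to accumulate on $\mathcal P(G)\cap J(G)$, and the rationally indifferent/repelling structure of this finite set, combined with the fact that each $g\in G$ has no wandering domains, should rule this out.
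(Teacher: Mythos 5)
Your reduction to individual elements of $G$ is in substance the paper's own argument: the paper also funnels everything through Lemma \ref{sec2,lem5'}, obtaining via Lemmas \ref{sec2,lem3} and \ref{sec2,lem4} the containment $\bigl(\bigcup_{g\in G}\mathcal P(g)\bigr)\cap\bigl(\bigcup_{g\in G}J(g)\bigr)\subset\mathcal P(G)\cap J(G)$ and then using $J(f)=J(k)$ for all $f,k\in G$ (\cite[Theorem 5.9]{dinesh1}) to conclude that $\mathcal P(g)\cap J(g)$ is finite of the prescribed type for every $g\in G$, hence that no element of $G$ has wandering domains. Your variant --- first the generators via $\mathcal P(g_i)\cap J(g_i)\subset\mathcal P(G)\cap J(G)$, then arbitrary words $g_1^{l_1}\circ\cdots\circ g_n^{l_n}$ via iterated use of Lemma \ref{sec2,lem5''} --- reaches the same intermediate conclusion and is sound (using $\mathcal P(g_i^{l_i})=\mathcal P(g_i)$ and closure of bounded type under composition).

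The genuine gap is the final lift, which you flag but do not close. First, note that the paper does not invoke \cite[Theorem 5.23]{dinesh1} in this theorem at all: having shown that every $g\in G$ has no wandering domains, it passes directly to the conclusion that $G$ has none, so the obstacle you identify (that $\mathcal P(G)\cap J(G)=\emptyset$ is unavailable) concerns a step the paper never takes. Second, your substitute is only a sketch (``should be obtainable'', ``should rule this out''), and the implication you actually need --- from ``no element of $G$ has wandering domains'' to ``$G$ has no wandering domains'' --- is not automatic: even granting that all elements of $G$ share one Fatou set (which follows here from $J(f)=J(k)$ for all $f,k\in G$ together with Lemma \ref{sec2,lem4}, so that $F(G)=F(g)$), a component of $F(G)$ could a priori visit infinitely many components under arbitrary words in several generators while each single element moves every component through only finitely many; finiteness of orbits is not in general preserved when one passes to the semigroup generated by several maps, so some use of commutativity or of the common Fatou set is required at exactly this point. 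As written, your proposal leaves this step unproven (and your proposed accumulation-on-$\mathcal P(G)\cap J(G)$ contradiction is not carried out), whereas the paper treats it as immediate; either way, a complete write-up must supply an argument here rather than a pointer to \cite[Theorem 5.23]{dinesh1}, whose hypothesis fails in the present setting.
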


\begin{proof}
Using Lemmas \ref{sec2,lem3} and \ref{sec2,lem4}, we have
\begin{equation}\label{sec2,eq2''}
 \begin{split}\notag
\mathcal P(G)\cap J(G)
&=\overline{\B(\bigcup_{g\in G}\mathcal P(g)\B)}\bigcap\overline{\B(\bigcup_{g\in G}J(g)\B)}\\
&\supset\B(\bigcup_{g\in G}\mathcal P(g)\B)\bigcap\B(\bigcup_{g\in G}J(g)\B).\\
\end{split}
\end{equation}
As $\mathcal P(G)\cap J(G)$ is finite and consists only of rationally indifferent or repelling periodic points and preimages of such points, it follows that the same also holds for $\mathcal P(g)\cap J(h)$ for at most finitely many $g, h\in G,$ whereas, for infinitely many $g', h'\in G,$ $\mathcal P(g')\cap J(h')=\emptyset.$ Using \cite[Theorem 5.9]{dinesh1}, $J(f)=J(k)$ for all $f, k\in G.$ As a result, $\mathcal P(g)\cap J(g)$ is finite and consists only of rationally indifferent or repelling periodic points and preimages of such points for at most finitely many $g\in G,$ while for infinitely many  $h\in G,$ $\mathcal P(h)\cap J(h)=\emptyset.$  From Lemma \ref{sec2,lem5'}, for all $g\in G, g$ has no wandering domains and hence it follows that $G$ cannot have wandering domains. 
\end{proof}

\end{document}